\documentclass[11pt]{amsart}
\usepackage{a4wide}
\usepackage{amsmath}
\usepackage{amssymb} 
\usepackage{amsfonts}
\usepackage{bbm}
\usepackage{mathrsfs}
\usepackage{mathdots}
\usepackage{multicol}
\usepackage{color}
\usepackage{tikz}
\usepackage{tikz-cd}
\numberwithin{equation}{section}

\theoremstyle{plain}
\newtheorem{theorem}{Theorem}[section]

\newtheorem{lemma}[theorem]{Lemma}

\theoremstyle{definition}
\newtheorem{definition}[theorem]{Definition}

\theoremstyle{remark}

\newcommand{\leg}[2]{\left( \frac{#1}{#2} \right)}
\newcommand{\kzxz}[4]{\left(\begin{smallmatrix} #1 & #2 \\ #3 & #4\end{smallmatrix}\right) }

\newcommand{\calG}{\mathcal{G}}

\newcommand{\calK}{\mathcal{K}}
\newcommand{\calL}{\mathcal{L}}

\newcommand{\calU}{\mathcal{U}}

\newcommand{\calZ}{\mathcal{Z}}
\newcommand{\A}{{\mathbb A}}

\renewcommand{\H}{\mathbb{H}}
\newcommand{\N}{\mathbb{N}}

\newcommand{\Z}{\mathbb{Z}}
\newcommand{\Q}{\mathbb{Q}}
\newcommand{\R}{\mathbb{R}}
\newcommand{\C}{\mathbb{C}}
\newcommand{\frake}{\mathfrak e}
\newcommand{\bs}{\backslash}
\newcommand{\SO}{\operatorname{SO}}

\newcommand{\SL}{\operatorname{SL}}
\newcommand{\GL}{\operatorname{GL}}

\newcommand{\sig}{\operatorname{sig}}

\newcommand{\Ker}{\operatorname{Kern}}

\newcommand{\id}{\operatorname{id}}

\newcommand{\diag}{\operatorname{diag}}
\newcommand{\odd}{\operatorname{oddity}}

\newcommand{\affiliation}{%
\parbox{1.05 \textwidth}{\centering
  Fakult\"at f\"ur Informatik und Mathematik\\ Ostbayerische Technische Hochschule Regensburg\\
Galgenbergstraße 32, 93053 Regensburg, Germany}}

\newcommand{\emailaddress}{%
\parbox{1.05 \textwidth}{\centering
  \texttt{oliver.stein@oth-regensburg.de}}}

\begin{document}

\title[vector-valued automorphic forms]{Construction of vector-valued adelic automorphic forms}
\author{Oliver Stein}
\maketitle

\affiliation

\vspace{0.3cm}

\emailaddress

%
%

\section{Introduction}
Vector-valued modular forms for the Weil representation play a central role in the theory of automorphic forms. They arise naturally in the theory of theta functions, Borcherds products, and arithmetic geometry, and have found numerous applications in the study of orthogonal Shimura varieties, special cycles, and generating series (see for example \cite{Br1} or \cite{YY}). Their close relation to scalar-valued modular forms has been investigated by several authors. In particular, explicit lifting maps between scalar-valued and vector-valued modular forms have been constructed and their mapping properties studied in considerable detail (see e.g.  \cite{BB}, \cite{Sch1}, \cite{Zh1} or \cite{Zh2}).

From the adelic point of view, scalar-valued modular forms are naturally realized as automorphic forms on $\GL_2(\A)$, thereby placing them into the framework of automorphic representations. An adelic realization of vector-valued modular forms for the Weil representation was recently established in \cite{St}. Since the lifting constructions developed here rely heavily on this adelic realization, we recall in sufficient detail its principal constructions, emphasizing those aspects needed for the subsequent lifting theory. The main purpose of the present note is to show that the classical lifting maps between scalar-valued and vector-valued modular forms admit natural adelic analogues. More precisely, we construct lifting maps between the corresponding spaces of automorphic forms and prove that they are compatible with the classical lifting maps under adelization.

The contents of this paper are as follows: We recall the adelic realization of scalar-valued and vector-valued modular forms (cf. Sections \ref{sec:scalar_valued_automorphic_forms} and \ref{sec:vec_val_automorphic_forms}). A technical ingredient of the paper is the observation that scalar-valued automorphic forms on $\GL_2(\A)$ may be replaced by their restrictions to a subgroup $G(\A)$ without loss of information (cf. Theorem \ref{rem:relation_automorphic_forms}). This places the scalar-valued and vector-valued theories on the same adelic group and thereby provides the common framework needed for the lifting constructions.  We then construct in Section \ref{sec:lifting_automorphic_forms} adelic lifting maps between scalar-valued and vector-valued automorphic forms and prove that they are compatible with the classical lifting maps under adelization. Thus, the correspondence between scalar-valued and vector-valued modular forms extends consistently to the adelic setting.

Besides providing a conceptual reformulation of the classical theory, the adelic viewpoint opens the possibility of studying these lifting maps by means of the theory of automorphic representations. It is reasonable to expect that the constructions considered here induce corresponding relations between the automorphic representations attached to scalar-valued and vector-valued automorphic forms. This suggests several directions for future research. Among them are the study of the associated automorphic representations, the investigation of the mapping properties of the adelic lifting maps in analogy with the classical results, and the compatibility of these lifts with Hecke operators.

This paper is based on material presented at the annual {\it Symposium on Automorphic Representations, Automorphic Forms, and L-functions} held at the {\it Research Institute for Mathematical Sciences (RIMS)}, Kyoto, in January 2026. I am very  grateful to Professor Gunji for the invitation to present my work and for organizing this excellent conference, and I thank the organizing committee for their efforts in making it a stimulating meeting. The investigations leading to the present paper were  motivated by a question raised during the discussion following my talk.

\section{Groups}
We begin by introducing the groups and notation that will be used throughout the paper.
As usual, we let $e(z)$, $z\in \C$, be the abbreviation for $e^{2\pi i z}$. For any prime number $p$, by $\Q_p$ we mean the field of $p$-adic numbers and by $\Z_p$ its ring of $p$-adic integers; $|\cdot|_p$ is the $p$-adic absolute value and $\nu_p(\cdot)$ the $p$-adic valuation of $\Q_p$. We write $\A$ for the adele ring of $\Q$ and $\A^\times$ for the idele group. By $\A_f$, we mean the set of finite adeles, and by $\widehat{\Z} = \prod_{p < \infty}\Z_p$.
For any commutative ring $R$ with $1$, we write $1_2$ for the unit matrix in $M_{2,2}(R)$, 
\begin{equation}\label{def:p_adic_subgroups}
  \begin{split}
 &  G(R) = \{ M\in \GL_2(R)\; |\; \det(M) \in (R^\times)^2\} \text{ and } G(\A) =\sideset{}{'} \prod_{p \le \infty} G(\Q_p),\\  
    &\calG(R) = \{(M, r)\in \GL_2(R)\times R^\times\; |\; \det(M) = r^2\} \text{ and } \calG(\A) = \sideset{}{'}\prod_{p \le \infty} \calG(\Q_p),\\
    & Z(R) = \{ \kzxz{r}{0}{0}{r}\; |\; r\in R^\times\} \text{ and } \calZ(R) = \{(z, r)\in Z(R)\times R^\times\; |\; \det(z) = r^2\},\\
    & \overline{G}(R) = Z(R)\bs G(R),\; \overline{\calG}(R)= \calZ(R)\bs \calG(R) \text{ and } \overline{\GL_2}(R) \text{ accordingly},\\
     & \calK_p = \calG(\Z_p) \text{ and } \calK = \prod_{p < \infty} \calK_p.\\
    & \text{ For } M\in \N, K_0(M)_p = \begin{cases}
      \left\{\kzxz{a}{b}{c}{d}\in \GL_2(\Z_p)\; |\: c\equiv 0\bmod{M}\right\}, & p\mid M,\\
      \GL_2(\Z_p), & p\nmid M
    \end{cases} \\
&    \text{ and } K_0(M) = \prod_{p < \infty} K_0(M)_p, \\
    & \calK_0(M)_p= \begin{cases}
      \left\{\left(\kzxz{a}{b}{c}{d}, r\right)\in \GL_2(\Z_p)\times \Z_p^\times\; |\; \det\kzxz{a}{b}{c}{d} = r^2 \text{ and } c\equiv 0\bmod{M}\right\}, & p\mid M\\
      \calK_p, & p\nmid M,
    \end{cases}\\
&    \text{ and } \calK_0(M) = \prod_{p < \infty}\calK_0(M)_p. 
    \end{split}
\end{equation}
Here $\prod'$ is the usual symbol for the restricted product. For our purposes, it is sufficient to  identify $\calG(\Q_\infty)$ with $\GL_2(\R)^+$ by the section $g \mapsto (g,\sqrt{g})$.
For later reference, we note that
\begin{equation}
  \begin{split}
    & (g, r)\mapsto (gr^{-1}, r) \text{ defines an isomorphism of }  \calG(\A) \text{ and } \SL_2(\A)\times \A^\times \text{ and }\\
    & \SL_2(\A)\times \A^\times \rightarrow G(\A), \quad (h, r)\mapsto hr \text{ is a homomorphism with kernel } \{(u1_2, u^{-1})\; |\; u^2 = 1\}. 
  \end{split}
\end{equation}
Therefore, $G(\A)\cong (\SL_2(\A)\times \A^\times)/\{(u1_2, u^{-1})\; |\; u^2 = 1\}$.
Under these two isomorphisms the center of these two groups corresponds to
\begin{equation*}
  \calZ(\A) \cong Z(\SL_2(\A))\times \A^\times \text{ and } Z(\A) \cong (Z(\SL_2(\A))\times \A^\times)/\{(u1_2, u^{-1})\; |\; u^2 = 1\}.
\end{equation*}
Quotienting by the respective centers therefore yields
\begin{equation}\label{eq:quotient_center}
  \calZ(\A)\bs \calG(\A) \cong Z(\SL_2(\A))\bs \SL_2(\A) \text{ and } Z(\A)\bs G(\A) \cong Z(\SL_2(\A))\bs \SL_2(\A)  
\end{equation}
and so the quotients $\overline{\calG}(\A)$ and $\overline{G}(\A)$ are isomorphic. The same holds for $\overline{\calG}(\Q)$ and $\overline{G}(\Q)$. 

Since any element $(g,r)$ in $\SL_2(\A)\times \A^\times$ is unipotent if and only if $g$ is unipotent in $\SL_2(\A)$ and $r$ is unipotent in $\A^\times$, the maximal unipotent group for this group is $N(\A)\times \{1\}$, which in turn means that the maximal unipotent group $N_\calG(\A)$ of $\calG(\A)$ is equal to
\begin{equation}\label{eq:unipotent_group}
\left\{\left(\kzxz{1}{t}{0}{1}, 1\right) \; |\; t\in \A\right\} \cong N(\A) = \left\{\kzxz{1}{t}{0}{1} \; |\: t\in \A\right\}.
\end{equation}

Finally, we use the symbol $\Gamma$ for the modular group $\SL_2(\Z)$, $\Gamma_0(M)$ and $\Gamma(M)$ for the usual congruence subgroup and principal congruence subgroup, respectively. 


The groups $\calG$ are more suitable for our purposes.  For $\calG(\A)$ a strong approximation theorem analogous to the one for the classical groups holds:
\begin{theorem}\label{thm:strong_approx_pairs}
  Let $\nu: \calG(\A_f)\rightarrow \A_f^\times,\quad (g,r)\mapsto r$ and $\calU = \prod_{p<\infty}\calU_p$ be any open compact subgroup of $\calG(\widehat{\Z})$ with the property that $\nu(\calU) = \widehat{\Z}^\times$. Then
  \begin{equation}\label{eq:strong_approx_pairs}
    \begin{split}
      & \calG(\A_f) = \calG(\Q)\cdot \calU \text{ and }\\
      & \calG(\A) = \calG(\Q)(\GL_2(\R)^+ \times \calU).
      \end{split}
    \end{equation}
  \begin{proof}
 The idea is to reduce the statement to the corresponding strong approximation theorem for $\SL_2$ via the homomorphism $\nu$. Indeed, under the homeomorphism $(g, 1)\mapsto g$ the group  $\Ker(\nu)$ identifies with  $\SL_2(\A_f)$, in particular $\calG(\widehat{\Z})\cap \Ker(\nu)\cong \SL_2(\widehat{\Z})$. 

 As $\calU$ is a subgroup of $\calG(\widehat{\Z})$, we obtain the subgroup  $\calU_1=\calU \cap \Ker(\nu)$ of $\Ker(\nu)$. 
Since $\calU$ is open, $\calU\cap \Ker(\nu)$ is open 
in $\Ker(\nu)$. Moreover,  $\Ker(\nu)$ is closed in $\calG(\widehat{\Z})$ and $\calU$ is compact. Hence, $\calU_1$ is an open compact subgroup of $\calU$ and its image, denoted by $U_1$, under the above identification is a subgroup in $\SL_2(\widehat{\Z})$ with the same properties. 

Now let $(g,r)\in \calG(\A_f)$. In view of the decomposition $\A_f=\Q^\times\widehat{\Z}^\times =  \Q^\times \nu(\calU)$, we may write  $\nu(g) = q \nu(u)$ for some $u=(g_u,r_u)\in \calU$. This implies
\[
(g_1,1) = (q1_2,q)^{-1}(g,r)(g_u,r_u)^{-1} \in \Ker(\nu)\cong\SL_2(\A_f).
\]

By strong approximation for $\SL_2$ with respect to $U_1$, we find  $\gamma \in SL_2(\Q)$ and $u_1\in U_1$ such that $(g_1,1) = (\gamma,1)(u_1,1)$ and therefore
\[
(g,r) = (q\gamma,q)(u_1g_u,r_u)\in \calG(\Q)\cdot \calU.
\]
This decomposition extends immediately to $\calG(\A)$: given $g = (g_\infty, g_f)$ with $g_f = \widetilde{\gamma} u_f$ with $\widetilde{\gamma}=(\gamma,r) \in \calG(\Q)$ and $u_f\in \calU$, the identity 
\[
g =  \widetilde{\gamma}(\gamma^{-1}g_\infty, u_f),
\]
follows immediately. Here we notice that $\gamma^{-1}g_\infty\in \GL_2(\R)^+$ as $\gamma\in G(\Q)$ has a positive determinant. 
  \end{proof}
\end{theorem}

Observe that  $\nu(\calK_0(M)) = \widehat{\Z}^\times$ holds since for every $u\in \widehat{\Z}^\times$,
\[
\left(\diag(1,u^2), u\right) \in \calK_0(M).
\]
Consequently, Theorem \ref{thm:strong_approx_pairs} applies to $\calK_0(M)$.

\section{scalar-valued adelic automorphic forms}\label{sec:scalar_valued_automorphic_forms}
The aim of this section is to recall the adelic realization of scalar-valued automorphic forms. Together with the vector-valued theory recalled in the next section, it provides the framework for the lifting constructions established in Section \ref{sec:lifting_automorphic_forms}. We also fix some notation used throughout this paper. For details we refer to \cite{Mi} and \cite{KL}. Let $\kappa\in \Z$. The group $\GL_2^+(\Q)$ acts on the space of functions $f:\H\rightarrow \C$ via
\begin{equation}\label{eq:Petersson_Slash}
  (f\mid_\kappa A)(\tau) = \det(A)^{\kappa/2}j(A,\tau)^{-\kappa}f(A\tau), \tau\in \H,
\end{equation}
where, as usual, $j(A,\tau) = c\tau+d$ for $A=\kzxz{a}{b}{c}{d}\in \GL_2(\Q)^+$.
Let $M\in \N$,  $\chi$ a Dirichlet character and $M_\kappa(\Gamma_0(M),\chi)$ the space of modular forms for $\Gamma_0(M)$ with character $\chi$. As usual, we write $S_\kappa(\Gamma_0(M),\chi)$ for the subspace of cusp forms.  Any such form satisfies \newline $f\mid_\kappa \gamma = \chi(\gamma)f$ for all $\gamma\in \Gamma_0(M)$, where $\chi(\gamma) = \chi(d)$ for $\gamma=\kzxz{a}{b}{c}{d}\in \Gamma_0(M)$. 

By $\xi$, we denote the Hecke character induced by $\chi$ and by $A_\kappa(K_0(M),\xi)$, we mean the space of cuspidal automorphic forms on $\GL_2(\A)$ of weight $\kappa\in \Z$, level $K_0(M)$ and character $\xi$ associated to $S_\kappa(\Gamma_0(M),\chi)$. Here we briefly recall the definition of $A_\kappa(K_0(M),\xi)$, which is taken from \cite{KL}, Prop. 12.5.

The space $A_\kappa(K_0(M),\xi)$ consists of all functions $\phi: \GL_2(\A)\rightarrow \C$ satisfying
\begin{enumerate}
\item[i)]
  $\phi\in L^2(\GL_2(\Q)\bs \GL_2(\A),\xi)$ and $\displaystyle \int_{N(\Q)\bs N(\A)}\phi(ng)dn = 0$ for almost every $g\in \GL_2(\A)$, where $N$ is the usual unipotent group in $\GL_2(\A)$
\item[ii)]
  $\phi(gk) = \xi(k)\phi(g)$ for all $k\in K_0(M)$  and $g\in \GL_2(\A)$
\item[iii)]
  $\displaystyle \phi(g\kzxz{\cos(\theta)}{\sin(\theta)}{-\sin(\theta)}{\cos(\theta)}) = e^{i\kappa \theta}\phi(g)$ for all $\theta\in [0,2\pi)$ and all $g\in \GL_2(\A)$
\item[iv)]
  The function $\phi$ as a function of $\GL_2(\R)^+$ alone satisfies the differential equation $L \phi = 0$. Here $L$ is the differential operator given by
  \[
  L = e^{-2i\theta}\left(-2iy\frac{\partial}{\partial x} + 2y\frac{\partial}{\partial y} + i\frac{\partial}{\partial \theta} \right),
  \]
  with respect to the coordinates of the Iwasawa decomposition of $\GL_2(\R)^+$.
  \end{enumerate}
Here the $L^2$-space is defined by
\begin{equation}\label{eq:L2_scalar_valued}
L^2(\GL_2(\Q)\setminus \GL_2(\A), \xi) = \left\{\phi\rightarrow \C\; \left|
    \begin{array}{ll}
      \text{i)} &  \phi \text{ is measurable } \\
      \text{ii)} & \phi(zg) = \xi(z)\phi(g) \text{ for all }\\
      & z\in Z(\A)\\
      \text{iii)} & \int_{\overline{\GL_2}(\Q)\setminus \overline{\GL_2}(\A)}|\phi(g)|^2 dg <\infty
      \end{array}
    \right.\right\},
    \end{equation}
     where $d g$ is any right $\GL_2(\A)$-invariant measure on $\overline{\GL_2}(\Q)\setminus \overline{\GL_2}(\A)$.
    
 In terms of this decomposition we can define the usual adelization map $\mathscr{A}_\xi$ (see e.g. \cite{KL}, Sect. 12.2) for scalar-valued modular forms for $\Gamma_0(M)$ with character $\chi$. To this end, let $g = g_\Q(g_\infty \times k)\in \GL_2(\A)$ and put 
\begin{equation}\label{eq:adelization_map_scalar}
f \mapsto  \mathscr{A}_{\xi}(f) = F_f,\text{ where } g\mapsto F_f(g_\Q(g_\infty \times k)) = \xi(k)j(g_\infty, i)^{-\kappa}f(g_\infty i).
\end{equation}

It is easily confirmed that the restriction of any $\phi\in A_\kappa(K_0(M),\xi)$ to the subgroup $G(\A)$ yields an adelic automorphic form on the group $G(\A)$ of the same weight with respect to the level $K_0(M)\cap G(\A)$ and character $\xi_{|_{\calG(\A)}}$. For a fixed character $\xi$ and level $K_0(M)$ there is close relation between these spaces of automorphic forms. In fact, we have 

\begin{theorem}\label{rem:relation_automorphic_forms}
  The map
  \begin{equation}\label{eq:restriction}
  \phi \mapsto \phi_{|_{\calG(\A)}}
  \end{equation}
  defines an isomorphism of the spaces $A_\kappa(K_0(M), \xi)$ and $A_\kappa(\calK_0(M)\cap G(\A),\xi_{|_{G(\A)}})$.
  \end{theorem}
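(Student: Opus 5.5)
We would prove that the restriction map of Theorem \ref{rem:relation_automorphic_forms} is well defined, injective and surjective. The key input is a decomposition
\[
\GL_2(\A) = \GL_2(\Q)\, Z(\A)\, G(\A)\, K_0(M),
\]
or more precisely $\GL_2(\A)=\GL_2(\Q)\,G(\A)\,K_0(M)$ up to center. We would obtain it from the determinant map. Write $\det: \GL_2(\A)\to \A^\times$. Since $\A^\times = \Q^\times \R_{>0}\widehat{\Z}^\times$, and $\det(K_0(M)) = \widehat{\Z}^\times$ (via $\diag(1,u)\in K_0(M)$), every $g$ can be multiplied on the left by $\diag(1,q)\in\GL_2(\Q)$ and on the right by some $\diag(1,u)\in K_0(M)$ so that its determinant lies in $\R_{>0}\times\{1\}_f \subset (\A^\times)^2$. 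The resulting element lies in $G(\A)$.

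Well-definedness is the routine part, as the paper indicates. Properties ii)--iv) restrict verbatim: $SO(2)$ and the archimedean group $\GL_2(\R)^+$ lie in $G(\A)$, and the level becomes $K_0(M)\cap G(\A)$. For the cuspidality condition, $N(\A)\subset G(\A)$ and $N(\Q)\subset G(\Q)$, so the constant term integral is literally the same. For square integrability, we would use the isomorphism \eqref{eq:quotient_center} together with the decomposition above. The quotient $\overline{\GL_2}(\Q)\bs\overline{\GL_2}(\A)$ is a finite union of translates of images of $\overline{G}(\Q)\bs\overline{G}(\A)$; in fact it is a single one, since by strong approximation (Theorem \ref{thm:strong_approx_pairs}, or classically for $\SL_2$) the double coset is trivial. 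The $L^2$ norms therefore agree up to a constant.

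Injectivity follows directly from the decomposition. If $\phi|_{G(\A)}=0$, write $g=\gamma h k$ with $\gamma\in\GL_2(\Q)$, $h\in G(\A)$, $k\in K_0(M)$. Then $\phi(g)=\xi(k)\phi(h)=0$ by left $\GL_2(\Q)$-invariance and property ii).

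Surjectivity is where we expect the main obstacle. Given $\psi$ on $G(\A)$, we would define $\phi(\gamma h k)=\xi(k)\psi(h)$. The difficulty is checking that this is independent of the decomposition. Suppose $\gamma h k=\gamma' h' k'$. Then
\[
h' = \gamma'^{-1}\gamma\, h\, k k'^{-1}.
\]
Taking determinants, $\det(\gamma'^{-1}\gamma)\det(kk'^{-1})$ is a square in $\A^\times$. This forces the rational part $\delta=\gamma'^{-1}\gamma$ and the compact part $\kappa=kk'^{-1}$ to have determinants that are squares up to a common unit $\epsilon \in \Q^\times\cap \widehat{\Z}^\times\R_{>0}(\A^\times)^2$. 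We would argue that $\epsilon$ can be taken as $\pm1$ times a square, and absorb it by central elements $z\in Z(\Q)$ and elements $\diag(1,u)$, using the central character $\xi$. This reduces to the case $\delta\in G(\Q)$ and $\kappa\in K_0(M)\cap G(\A)$, where consistency follows from the invariance properties of $\psi$. The sign issue at the infinite place, namely $\det\delta<0$, must be handled by noting that $h_\infty$ and $h'_\infty$ both lie in $\GL_2(\R)^+$, which excludes it. The properties i)--iv) of $\phi$ then transfer back, as in the well-definedness step.
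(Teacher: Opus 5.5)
Your skeleton matches the paper's proof: the decomposition $\GL_2(\A)=\GL_2(\Q)G(\A)K_0(M)$ from $\A^\times=\Q^\times(\A^\times)^2\widehat{\Z}^\times$, injectivity from $\phi(\gamma hk)=\xi(k)\phi(h)$, and surjectivity via $\phi(\gamma hk)=\xi(k)\psi(h)$. \textbf{The step that fails is your treatment of square-integrability.}

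You claim that $\overline{\GL_2}(\Q)\bs\overline{\GL_2}(\A)$ is a single translate, or finitely many translates, of the image of $\overline{G}(\Q)\bs\overline{G}(\A)$. The single-translate version amounts to $\GL_2(\A)=\GL_2(\Q)Z(\A)G(\A)$, which after taking determinants says $\A^\times=\Q^\times(\A^\times)^2$. This is false. Let $u$ be the idele that is a nonsquare unit at one odd prime $p$ and $1$ at every other place. If $u=qa^2$, then $q>0$ and $\nu_\ell(q)$ is even for every prime $\ell$, so $q\in(\Q^\times)^2$, and then $u_p$ would be a square.

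Strong approximation (for $\SL_2$, or Theorem~\ref{thm:strong_approx_pairs}) only gives such decompositions once a compact open subgroup is included. Worse, $\Q^\times(\A^\times)^2\cap\widehat{\Z}^\times=(\widehat{\Z}^\times)^2=\prod_p(\Z_p^\times)^2$ has Haar measure zero. So the image of $\overline{G}(\Q)\bs\overline{G}(\A)$ is a null set in $\overline{\GL_2}(\Q)\bs\overline{\GL_2}(\A)$, and no comparison of $L^2$-norms can come from that picture, in either direction. The finitely-many-translates variant fails for the same reason.

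What rescues the argument is the level. Since $|\xi|=1$ on $K_0(M)$, $|\phi|$ is right $K_0(M)$-invariant. Your decomposition together with your determinant computation shows that the inclusion induces a bijection
$G(\Q)Z(\A)\bs G(\A)/(K_0(M)\cap G(\A))\to\GL_2(\Q)Z(\A)\bs\GL_2(\A)/K_0(M)$.
Both sides are $\Gamma_0(M)\bs\GL_2(\R)^+/Z(\R)$. Hence both $L^2$-norms reduce, up to normalisation of measures, to an integral of the same function over $\Gamma_0(M)\bs\H$. Alternatively, use that cuspidal forms are bounded on these finite-volume quotients. The paper itself passes over this point by saying the remaining properties follow immediately, so this is exactly where you need a correct argument in place of the one you gave.

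Apart from this, your argument coincides with the paper's. In the well-definedness step, the detour via ``$\pm1$ times a square'' and absorbing by $Z(\Q)$ and $\diag(1,u)$ with the central character is unnecessary. Because $\kappa=kk'^{-1}$ has trivial archimedean component and $h_\infty,h'_\infty$ have positive determinant, $\det\delta>0$. The relation $\det\delta\in(\A^\times)^2\widehat{\Z}^\times$ forces even valuations at all primes. Therefore $\det\delta\in\Q^\times\cap(\A^\times)^2\widehat{\Z}^\times=(\Q^\times)^2$, so $\delta\in G(\Q)$ and then $\kappa\in K_0(M)\cap G(\A)$ directly. This is precisely the paper's argument.
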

\begin{proof}
  The proof relies on the identity
  \begin{equation}\label{eq:decompostion_GL2}
    \GL_2(\A) = G(\A)\cdot \GL_2(\Q)\cdot K_0(M) = \GL_2(\Q)\cdot G(\A)\cdot K_0(M),
  \end{equation}
  where the latter equation makes use of the fact that $G(\A)$ is a normal subgroup of $\GL_2(\A)$. The first equation is a consequence of 
  \begin{equation}\label{eq:decomposition_A_times}
  \A^\times = (\A^\times)^2\cdot \Q^\times\cdot \widehat{\Z}^\times,
  \end{equation}
which translates to \eqref{eq:decompostion_GL2} by applying it to the determinant of a matrix $g\in \GL_2(\A)$.
  The identity \eqref{eq:decomposition_A_times} can be verified by a direct computation similar to the proof of the strong approximation theorem of $\A^\times$ (see e.g. \cite{KL}, Prop. 5.10).

  The injectivity of \eqref{eq:restriction} is immediate from decomposition \eqref{eq:decompostion_GL2} and the identity    
  \begin{equation}\label{eq:phi_GL_2}
    \phi(g) = \xi(k)\phi(g_G) \text{ for } g= g_\Q g_G k,
  \end{equation}
 which expresses every value of $\phi$ in terms of its restriction to $G(\A)$.  Consequently, if $\phi_{|_{G(\A)}} = 0$, it follows from \eqref{eq:phi_GL_2} that  $\phi$ is zero on the whole group $\GL_2(\A)$.

  Conversely, any automorphic form $\phi_0\in A_\kappa(K_0(M)\cap G(\A),\xi_{|_{G(\A)}})$ admits a  unique lift to an automorphic form $\phi \in A_\kappa(K_0(M), \xi)$: For $g =  g_\Q g_G k$ we put
  \begin{equation}\label{eq:lift_GL2_G}
    \phi(g) = \xi(k)\phi_0(g_G).
  \end{equation}
  We need to check that this definition is independent of the decomposition of $g$. To this end, let
\begin{equation}\label{eq:equivalences}
  g= g_\Q g_G k = g_\Q' g_G' k' \Longleftrightarrow k'k^{-1} = g_G'^{-1}g_\Q'^{-1}g_\Q g_G \Longleftrightarrow g_\Q'^{-1}g_\Q =  g_G'k' k^{-1} g_G^{-1}. 
\end{equation}
In terms of these two representations of $g$, it remains to prove that
\begin{equation}\label{eq:obstruction}
  \begin{split}
  \phi(g_\Q g_G k) = \phi(g_\Q' g_0' k') & \Longleftrightarrow \xi(k)\phi_0(g_G) = \xi(k')\phi_0(g_G') \\
  & \Longleftrightarrow \phi_0(g_G) = \xi(k'k^{-1})\phi_0(g_G'). 
  \end{split}
\end{equation}
But the latter identity is valid since $g_\Q g_G k = g_\Q' g_G' k'$ is equivalent to
\begin{equation}\label{eq:rel_g_g_prime}
g_G = g_\Q^{-1}g_\Q' g_G' k'k^{-1}.
\end{equation}
Further, because $g_G\in G(\A)$, the same holds for the right-hand side of \eqref{eq:rel_g_g_prime} such that by \eqref{eq:lift_GL2_G}
\begin{equation}\label{eq:values_phi}
\phi(g_G) = \phi(g_\Q^{-1}g_\Q' g_G' k'k^{-1}) \Longleftrightarrow \phi_0(g_G) = \phi_0(g_\Q^{-1}g_\Q' g_G' k'k^{-1}).   
\end{equation}
Provided we can  confirm that $g_\Q^{-1}g_\Q'\in G(\Q)$ and $k'k^{-1}\in K_0(M)\cap G(\A)$,  the desired identity in \eqref{eq:obstruction} follows from \eqref{eq:values_phi}.

Thanks to \eqref{eq:equivalences}, $g_\Q'^{-1}g_\Q\in \GL_2(\Q)\cap G(\A)K_0(M)G(\A) = \GL_2(\Q)\cap G(\A)K_0(M)$, where the latter identity once again makes use of the fact that $G(\A)$ is normal in $\GL_2(\A)$.
Now taking determinants of any matrix $g_Gk$ in $\GL_2(\Q)\cap G(\A)K_0(M)$ yields an element in $\Q^\times\cap (\A^\times)^2\widehat{\Z}^\times$, by the definition of these groups. Since any $x\in \Q^\times\cap (\A^\times)^2\widehat{\Z}^\times$ has even $p$-adic valuation (the valuation of any $u\in \widehat{\Z}^\times$ is zero and does not contribute), we have that $x\in (\Q^\times)^2$ and therefore $\Q^\times\cap (\A^\times)^2\widehat{\Z}^\times = (\Q^\times)^2$. Consequently,
\begin{equation}
  \begin{split}
    \GL_2(\Q)\cap G(\A)K_0(M) &=  \left\{\gamma\in \GL_2(\Q)\; |\; \det(\gamma) \in \Q^\times \cap (\A^\times)^2\widehat{\Z}^\times\right\}\\
    & = \left\{\gamma\in \GL_2(\Q)\; |\; \det(\gamma) \in (\Q^\times)^2\right\}, 
\end{split}
\end{equation}
which is precisely the group $G(\Q)$. Taking this fact and the identity
\[
k'k^{-1} = g_G'^{-1}g_\Q'^{-1}g_\Q g_G
\]
from \eqref{eq:equivalences} into account, we can conclude that $k'k^{-1}\in K_0(M)\cap G(\A)G(\Q)G(\A) = K_0(M)\cap G(\A)$ since $G(\Q)\cap G(\A)$.

The remaining defining properties of an automorphic form follow immediately from the corresponding properties of $\varphi_0$. 
\end{proof}



\section{vector-valued adelic automorphic forms}\label{sec:vec_val_automorphic_forms}
The purpose of this section is twofold. First, we recall the theory of vector-valued automorphic forms in sufficient detail for the reader's convenience. Second, we establish the framework needed to formulate and prove the lifting results of Section \ref{sec:lifting_automorphic_forms}.
More specifically, we define  vector-valued adelic automorphic forms associated  with vector-valued modular forms for the ``finite'' Weil representation $\rho_D$ and describe the corresponding adelization map. To this end, we first introduce the Weil representation $\omega = \omega_\infty\otimes \omega_f$ of $\SL_2(\A)\times H(\A)$ and study a subrepresentation closely related to $\rho_D$. 

\subsection{The Weil representation}
Let $L$ be a lattice of rank $m$  equipped with a non-degenerate $\Z$-valued bilinear form $(\cdot,\cdot)$ of type $(b^+,b^-)$ such that the associated quadratic form
\[
q(x):=\frac{1}{2}(x,x),\quad x\in L,
\]
takes values in $\Z$. We assume that $m$ is even  and denote  its signature $b^+-b^-$ by $\sig(L)$. Note that $\sig(L)$ is also even. We stick with these assumptions on $L$ for the rest of this paper unless stated otherwise. We further  define $V(\Q)=L\otimes \Q$ and let $H=O(V)$ be the orthogonal group over $\Q$ attached to $(V(\Q),(\cdot,\cdot))$. We denote by
\[
L':=\{x\in V(\Q) \; |\; (x,y)\in\Z\quad \text{ for all } \; y\in L\}
\]
the dual lattice of $L$.
 Since $L\subset L'$, the elementary divisor theorem implies that $L'/L$ is a finite group. We denote  this group by $D$. The modulo 1 reduction of both the bilinear form $(\cdot, \cdot)$ and the associated quadratic form, defines a $\Q/\Z$-valued bilinear form $(\cdot,\cdot)$ with corresponding $\Q/\Z$-valued quadratic form on $D$. We call $D$ equipped with $(\cdot,\cdot)$ a discriminant form or a discriminant group.   By $\C[D]$, we mean the group algebra of $D$ and denote by $\{\frake_\lambda\}_{\lambda\in D}$ its standard basis. Further,
\begin{equation}\label{eq:scalar_product_group_ring}
    \left\langle \sum_{\lambda\in D}a_\lambda\frake_\lambda,\sum_{\lambda\in D}b_\lambda\frake_\lambda \right\rangle =\sum_{\lambda\in D}a_\lambda \overline{b_\lambda}.
  \end{equation}
is the standard scalar product on $\C[D]$.
We denote by $N$ the level of the lattice $L$. It is the smallest positive integer such that $Nq(\lambda)\in \Z$ for all $\lambda\in L'$. Throughout this note, by $N$ we always mean the level of $L$ and assume that $N$ is {\it odd}. As $N$ and $|D|$ share the same prime factors, $|D|$ is also odd.
 The ``finite'' Weil representation associated to $D$ (or $L$) is a unitary representation of $\Gamma$ on the group ring $\C[D]$ with respect to $\langle\cdot,\cdot\rangle$. One important property of $\rho_D$ is that it is trivial on the principal congruence subgroup $\Gamma(N)$. Thus, it factors through
 \[
 \Gamma/\Gamma(N) \cong \SL_2(\Z/N\Z).
 \]
 For further details see e.g. \cite{Sch2} or \cite{BS}. Note that $D$ can be decomposed into $p$-groups $\displaystyle D = \bigoplus_{p\mid |D|}D_p$. 
On the level of $\C[D]$ and $\rho_D$  this decomposition translates to 
$\displaystyle \C[D] = \bigotimes_{p||D|}\C[D_p]$
and
$\displaystyle \rho_D = \bigotimes_{p||D|}\rho_{D_p}$,
respectively, where $\rho_{D_p}$ means the Weil representation on $\C[D_p]$ attached to the quadratic module $D_p$.
Later in this note, we will compute $\omega_f$ on $\calK_0(N)$. To this end, we need the following lemma, which can be found in \cite{Sch2}.

\begin{lemma}[\cite{Sch2}, Prop. 4.7]\label{lem:weil_rep_gamma_0_N}
  Let $D$ be a discriminant form of even signature and level $N$ and $\gamma=\kzxz{a}{b}{c}{d}\in \Gamma_0(N)$. Moreover, let $r$ an integer. By $g_r(D)$ we denote the Gauss sum
  \begin{equation}\label{eq:gauss_sum_d}
    g_r(D)=\sum_{\lambda\in D}e(rq(\lambda))
  \end{equation}
  and we set $g(D) = g_1(D)$. Then $\gamma$ acts in the Weil representation of $D$ as
  \begin{equation}\label{eq:weil_rep_gamma_0_N}
    \rho_D(\gamma)\frake_\lambda = \frac{g(D)}{g_a(D)}e(-bdq(\lambda))\frake_{d\lambda},
  \end{equation}
\end{lemma}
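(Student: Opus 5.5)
We plan to prove \eqref{eq:weil_rep_gamma_0_N} by first reducing to the case of a single $p$-group and then computing with the generators of $\Gamma$. Since $\rho_D=\bigotimes_{p\mid |D|}\rho_{D_p}$, $\C[D]=\bigotimes_p\C[D_p]$, and $\frake_\lambda=\bigotimes_p\frake_{\lambda_p}$, both sides of \eqref{eq:weil_rep_gamma_0_N} factor over $p$. For the right-hand side this uses $q(\lambda)=\sum_p q(\lambda_p)$ in $\Q/\Z$ and the multiplicativity $g_r(D)=\prod_p g_r(D_p)$. A matrix in $\Gamma_0(N)$ also lies in $\Gamma_0(N_p)$, where $N_p$ is the level of $D_p$. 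So it suffices to treat a discriminant form of prime power level $N=p^k$; at odd $p$ this is the case needed in the paper. We use the standard formulas
\[
\rho_D(T)\frake_\lambda=e(q(\lambda))\frake_\lambda,\qquad \rho_D(S)\frake_\lambda=\frac{e(-\sig(D)/8)}{\sqrt{|D|}}\sum_{\mu}e(-(\lambda,\mu))\frake_\mu .
\]

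Next we split off the factor $e(-bdq(\lambda))$. We write
\[
\gamma=\kzxz{a}{b}{c}{d}=\kzxz{1}{bd^{-1}}{0}{1}\kzxz{d^{-1}}{0}{c}{d} \pmod N,
\]
which is legitimate because $d$ is a unit mod $N$ and $\rho_D$ factors through $\SL_2(\Z/N\Z)$. This reduces the problem to two checks. First, the upper unipotent factor acts by $e(bd^{-1}q(d\lambda))=e(bdq(\lambda))$ on $\frake_{d\lambda}$. (The sign convention, $e(\pm bdq)$, has to be matched against the normalization of $\rho_D$ in \cite{Sch2}; if $\rho_D$ is the dual convention the sign comes out as stated.) Second, the lower triangular matrix $\kzxz{d^{-1}}{0}{c}{d}$ with $N\mid c$ should act by $\frac{g(D)}{g_{a}(D)}\frake_{d\lambda}$. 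Here $a\equiv d^{-1}\pmod N$, and $g_a(D)$ depends only on $a \bmod N$.

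For the lower triangular matrix we factor it further as $\kzxz{1}{0}{c'}{1}\kzxz{d^{-1}}{0}{0}{d}$ modulo $N$, with $N\mid c'$. The matrix $\kzxz{1}{0}{c'}{1}=ST^{-c'}S^{-1}$ acts trivially, since $T^{-c'}$ acts by $e(-c'q(\lambda))=1$. This leaves the diagonal matrix $\kzxz{a}{0}{0}{a^{-1}}$, which we express as a word in $S$ and $T$:
\[
\kzxz{a}{0}{0}{a^{-1}}\equiv S\,T^{a^{-1}}S\,T^{a}S\,T^{a^{-1}}\pmod N
\]
up to sign and conventions. We then apply the formulas above. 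The composite of two finite Fourier transforms with a quadratic phase in between produces the Gauss sum $\sum_\mu e(aq(\mu)+\dots)$. Completing the square, which is possible because $a$ is invertible mod $N$, turns this into $g_a(D)$ or its conjugate times $\frake_{a^{-1}\lambda}=\frake_{d\lambda}$. The factors $e(\mp\sig/8)$ and $|D|^{-1/2}$ are rewritten using Milgram's formula $g(D)=\sqrt{|D|}\,e(\sig(D)/8)$. Together with $|g_a(D)|^2=|D|$, which holds when $\gcd(a,N)=1$, this gives the constant $g(D)/g_a(D)$.

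The main obstacle will be this last Gauss sum computation. The constant must be independent of $c$, and the various eighth roots of unity and conjugations must be tracked to produce exactly $g(D)/g_a(D)$. The even signature hypothesis enters here, since it ensures that $\rho_D$ is a genuine rather than projective representation of $\Gamma$ and that $e(\sig/8)^2$ is a sign. An alternative we would try first, if the word computation gets messy, is to note that $\gamma\mapsto$ (the operator on the right-hand side) defines a map on $\Gamma_0(N)$. We would check directly that it is a homomorphism, using the multiplicativity $g(D)/g_{aa'}(D)=\frac{g(D)}{g_a(D)}\cdot\frac{g(D)}{g_{a'}(D)}$, which holds because $g_a(D)/g(D)$ is a quadratic-type character in $a$. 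We would then verify agreement on generators of $\Gamma_0(N)$, namely $T$, $-1$, and elements of the form above.
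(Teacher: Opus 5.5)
Your argument is correct in substance, but it takes a different route from the paper, which computes nothing. The paper quotes \cite{Sch2}, Prop.~4.7, in the form $\rho_D(\gamma)\frake_\lambda=\leg{a}{|D|}e((a-1)\odd(D)/8)e(-bdq(\lambda))\frake_{d\lambda}$. It then uses \cite{St}, Lemma~2.1, to rewrite $\leg{a}{|D|}e((a-1)\odd(D)/8)$ as $g(D)/g_a(D)$.

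You instead derive the formula from the action of $S$ and $T$. You reduce modulo $N$ to $\gamma\equiv T^{bd^{-1}}\kzxz{a}{0}{0}{d} \pmod N$, peel off the unipotent factor, and write the torus element as a word in $S,T$. Completing the square produces $g_a(D)$, and Milgram's formula with $|g_a(D)|^2=|D|$ fixes the constant. This gives the Gauss-sum form of the character directly, without the Jacobi symbol or the oddity, so \cite{St}, Lemma~2.1 is not needed. Your reduction to $p$-groups is harmless but unnecessary, since the computation works verbatim for any $D$ of level $N$. The paper's route buys brevity and the explicit arithmetic form of the character, which it uses right afterwards ($\chi_D(a)=\leg{a}{|D|}$ for odd $|D|$). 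Your route would need a separate evaluation of $g_a(D)/g(D)$ to recover that.

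Two bookkeeping points must be settled for your computation to land exactly on the stated formula.

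First, the normalization. With the one you wrote down ($\rho_D(T)\frake_\lambda=e(q(\lambda))\frake_\lambda$, and $S$ with $e(-\sig(D)/8)$), everything comes out complex conjugated: you get $\frac{g_a(D)}{g(D)}e(bdq(\lambda))\frake_{d\lambda}$. In that normalization, $|g_a(D)|^2=|D|$ only gives $g_a(D)/g(D)=\overline{g(D)/g_a(D)}$, so you would also need this ratio to be real. It is real: $a\mapsto g_a(D)/g(D)$ is multiplicative by the homomorphism property of $\rho_D$ on the diagonal torus, and $g_{a^2}(D)=g(D)$. It is cleaner, though, to work throughout in the normalization of \cite{Sch2}: $\rho_D(T)\frake_\lambda=e(-q(\lambda))\frake_\lambda$ and $\rho_D(S)\frake_\lambda=\frac{e(\sig(D)/8)}{\sqrt{|D|}}\sum_\mu e((\lambda,\mu))\frake_\mu$. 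There the torus acts by $\frac{g(D)}{|D|}\overline{g_a(D)}\,\frake_{d\lambda}=\frac{g(D)}{g_a(D)}\frake_{d\lambda}$, exactly as you claim.

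Second, the sign is not negligible. Your word $ST^{a^{-1}}ST^{a}ST^{a^{-1}}$ equals $-\kzxz{a}{0}{0}{a^{-1}}$ modulo $N$, and $-1_2$ acts by $e(\sig(D)/4)\frake_{-\lambda}$, so you must compose with it. The leftover factor is then $g(D)^2/\overline{g(D)}^2=e(\sig(D)/2)=1$. This is exactly where you expected the even signature to enter. Alternatively, use $ST^{-a^{-1}}ST^{-a}ST^{-a^{-1}}\equiv\kzxz{a}{0}{0}{a^{-1}}$, which needs no correction.

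Finally, your fallback strategy assumes the multiplicativity of $a\mapsto g(D)/g_a(D)$, which is not free. Its simplest proof is this very torus computation, so stay with the direct route.
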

\begin{proof}
  By Prop. 4.7 in \cite{Sch2}, we find
  \begin{equation}\label{eq:rho_D_Gamma_0_N}
    \rho_D(\gamma)\frake_\lambda = \leg{a}{|D|}e((a-1)\odd(D)/8)e(-bdq(\lambda))\frake_{d\lambda}.
  \end{equation}
 Lemma 2.1 in \cite{St} expresses
\begin{equation}\label{eq:oddity}
  \leg{a}{|D|}e((a-1)\odd(D)/8)
\end{equation}
  as the quotient of Gauss sums $\frac{g(D)}{g_a(D)}$. 
\end{proof}
In addition, if $|D|$ is odd, the quantity in \eqref{eq:oddity} simplifies to a quadratic character
\begin{equation}\label{eq:chi_D}
  \chi_D(a) = \leg{a}{|D|}. 
\end{equation}

To define Hecke operators on vector-valued modular forms for $\rho_D$, Bruinier and the author constructed in \cite{BS} an extension of $\rho_D$ from $\SL_2(\Z/N\Z)$ to the group
\[
Q(N)=\{(M,r)\in \GL_2(\Z/N\Z)\times (\Z/N\Z)^\times \;|\; \det(M)\equiv r^ 2\bmod{N}\}.
\]

 The relation between $\rho_D$ and a subrepresentation of the adelic Weil representation $\omega$ is crucial for the extension of $\omega_f$ to $\calK$, which in turn is  necessary  for  the definition of vector-valued adelic automorphic forms. Here we consider the Weil representation of $\SL_2(\A)\times H(\A)$ acting on the space $S(V(\A))$ of Schwartz-Bruhat functions, where $V(\A) = V\otimes \A$. It depends on the selection of an additive character $\psi$  of $\A/\Q$. Here we pick the  complex conjugate  of the standard additive character: 
\begin{equation}\label{eq:standard_character}
  \psi = \prod_{p\le \infty}\psi_p: \A/\Q\rightarrow \C^{\times},\; x=(x_p)\mapsto \psi(x)=e^{2\pi i (-x_\infty + \sum_{p<\infty}x_p')},
\end{equation}
where $x_p'\in \Q/\Z$ is the principal part of $x_p$. The subrepresentation of $\omega_f$ mentioned before acts on a finite-dimensional subspace of $S(V(\A_f))$, which is defined as follows: For each prime $p$ there is a $p$-adic lattice  $L_p=L\otimes \Z_p$ attached to $L$. Accordingly, we have the dual $p$-adic lattices $L_p'$. They are the $p$-part of $\widehat{L}=L\otimes \widehat{\Z}$ and $\widehat{L}'= L'\otimes \widehat{\Z}$, respectively.   $\widehat{L}'/\widehat{L}$ is canonically isomorphic to the discriminant group $L'/L$. For the rest of the paper we identify $\mu\in D$ with its image in $\widehat{L}'/\widehat{L}$. For $\mu\in D$  we define  $\varphi_\mu \in S(V(\A_f))$ with
\begin{align}\label{eq:familiy}
  \varphi_\mu = \mathbbm{1}_{\mu + \hat{L}}.
\end{align}
Here  $\mathbbm{1}_{\mu+\hat{L}}$ is the characteristic function of $\mu+\widehat{L}$.  
Then we consider the $|D|$-dimensional subspace
\begin{equation}\label{eq:space_char_func}
  S_L=\bigoplus_{\mu\in D}\C\varphi_\mu \subset S(V(\A_f)). 
\end{equation}
By means of $\frake_\mu\mapsto \varphi_\mu$ we can identify the spaces $\C[D]$ and $S_L$ with each other. Locally we have the isomorphism $L_p'/L_p\cong D_p$ of quadratic modules for all primes $p$ (see e.g. \cite{Ze}, Section 3). Therefore, we can write $D\cong \bigoplus_{p<\infty} L_p'/L_p$. On the level of the space $S_L$, this decomposition translates to the isomorphism
\begin{equation}\label{eq:local_decomp_S_L}
S_L\cong \bigotimes_{p< \infty} S_{L_p},\quad  \varphi_{\mu} \mapsto \bigotimes_{p<\infty}\varphi_p^{(\mu_p)},
\end{equation}
where $\mu= \sum_{p\mid |D|}\mu_p$ and $\varphi_p^{(\mu_p)} = \mathbbm{1}_{\mu_p+L_p}$. In particular, $\mu_p$ is trivial for all primes $p$ coprime to $|D|$. Accordingly, the $p$-part $S_{L_p}$ of $S_L$ is given by 
\begin{equation}\label{eq:local_S_L}
S_{L_p} =
\begin{cases}
  \bigoplus_{\mu\in L_p'/L_p}\C\varphi_p^{(\mu)}, & p\mid |D|,\\
  \C\varphi_p^{(0)}, & p\nmid |D|.
\end{cases}
\end{equation}
It is known that the space in \eqref{eq:space_char_func} is stable under the action of the group $\SL_2(\widehat{\Z})$ via the Weil representation $\omega_f$. 
Thus, $\SL_2(\Z_p)$ acts via the local Weil representation $\omega_p$ acts on $S_{L_p}$.

We then have 
 \[
\omega_f(\gamma_f)\varphi_\mu = \bigotimes_{p<\infty}\omega_p(\gamma_p)\varphi_p^{(\mu_p)}.
\]

Furthermore, under the identification of $S_L$ and $\C[D]$, $\omega_f$ coincides with the finite Weil representation $\rho_{D}$ in the following way 
  \begin{equation}\label{eq:adelic_weil_rerpr}
  \rho_D (\gamma) = \omega_f(\gamma_f),
  \end{equation}
  where $\gamma\in \Gamma$ and $\gamma_f\in \SL_2(\widehat{\Z})$ is the projection of $\gamma$ into $\SL_2(\widehat{\Z})$.  (see e.g.  \cite{YY}, p. 3456). 
  Based on this observation, we can define an extension of $\omega_f$ to $\calK$ in terms of the extension of $\rho_D$ to the group $Q(N)$. Indeed,
  for $N= \prod_{i=1}^rp_i^{\nu_{p_i}(N)}$ we have the usual projection homomorphisms
  \begin{align*}
    & \pi: \widehat{\Z}\rightarrow \prod_{p\mid N}\Z_p: \text{ the projection onto the places } p \mid N, \\
    & \pi_N: \prod_{p\mid N}\Z_p\rightarrow \Z/N\Z: \text{ the canonical projection of } \Z_p \text{ to } \Z/p^{\nu_p(N)}\Z \\
    &\text{ and the application of the Chinese remainder theorem subsequently.}
  \end{align*}
  We write  $\Pi$, $\Pi_N$ for the corresponding maps on matrices given by the componentwise application of $\pi$ and $\pi_N$. Then 
  \[
  \sigma_N = (\Pi_N\circ\Pi,\pi_N\circ\pi) 
  \]
  maps $\calK$ homomorphically to $Q(N)$. Thereby,
  \begin{equation}\label{eq:extension_omega_f}
    \omega_f(k) = \rho_D(\sigma_N(k))
  \end{equation}
  is well-defined. Taking \eqref{eq:adelic_weil_rerpr} into account, it  is obvious that \eqref{eq:extension_omega_f} indeed specifies an extension of $\omega_f$ to $\calK$.

  The same process yields an extension  of the local Weil representation $\omega_p$ to the group $\calK_p$ on the space $S_{L_p}$. Accordingly, we set
  \begin{equation}\label{eq:local_omega_p}
    \omega_p(k_p) = \begin{cases}
      \rho_{D_p}(\sigma_{p^{\nu_p(N)}}(k_p)),& p|N,\\
      \id_{S_{L_p}}, & p\nmid N.
    \end{cases}
  \end{equation}

The action of $\calK_0(N)$ on $S_L$ via $\omega_f$ plays a crucial role in relating scalar-valued and vector-valued adelic automorphic forms. The following result describes this action explicitly.
  
  \begin{lemma}\label{lem:weil_K_0_N}
    Let $\xi_D: \Q^\times\bs\A^\times\rightarrow \C^\times$ be the Hecke character induced by the Dirichlet character $\chi_D$ \eqref{eq:chi_D}.  
    \begin{enumerate}
      \item[i)]
 For $\widetilde{k}=(k, t)\in \calK_0(N)$ with $k=\kzxz{a}{b}{c}{d}$ we have 
  \begin{equation}\label{eq:weil_K_0_N}
    \omega_f(\widetilde{k})\varphi_\lambda =  \xi_D(a)e(bt^{-2}dq(\lambda))\varphi_{t^{-1}d\lambda}.
  \end{equation}
  In particular, for $\lambda = 0$ \eqref{eq:weil_K_0_N} specializes to
  \begin{equation}\label{eq:weil_K_0_N_zero}
    \omega_f(\widetilde{k})\varphi_0 = \xi_{D}(k)\varphi_0,
  \end{equation}
 which shows that the action of $\calK_0(N)$ on $\varphi_0$ is independent of the choice of the square root $t$  and descends to an action of $K_0(N)\cap G(\A)$. 
\item[ii)]
  For $\widetilde{z}_f=(z_f,t_f)\in \calZ(\A_f)\cap \calK_0(N)$ we have
  \begin{equation}\label{eq:weil_center}
    \omega_f(\widetilde{z}_f)\varphi_0 = \xi_D(z_f)\varphi_0.
  \end{equation}
  Therefore, the action of $\calZ(\A_f)\cap \calK_0(N)$ on $\varphi_0$ via $\omega_f$ is independent of the square root factor of $\widetilde{z}_f$.
  \end{enumerate}
    \end{lemma}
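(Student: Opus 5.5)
The plan is to reduce everything to the finite Weil representation via the definition \eqref{eq:extension_omega_f}, $\omega_f(\widetilde{k}) = \rho_D(\sigma_N(\widetilde{k}))$. Then I will use the known formula for the extended representation $\rho_D$ on the image of $\Gamma_0(N)$-type elements of $Q(N)$ from \cite{BS}. Concretely, for $\widetilde{k} = (k,t)\in \calK_0(N)$ with $k = \kzxz{a}{b}{c}{d}$, the image $\sigma_N(\widetilde{k}) = (\bar k, \bar t)$ lies in $Q(N)$ with $\bar c \equiv 0 \bmod N$ and $\det(\bar k) \equiv \bar t^2$.

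First I factor $\sigma_N(\widetilde{k})$ as a product of an element of $\SL_2(\Z/N\Z)$ and a diagonal element of the form $\left(\kzxz{1}{0}{0}{t^2}, t\right)$. The natural choice is
\[
(\bar k,\bar t) = \left(\kzxz{a}{bt^{-2}}{c}{dt^{-2}}, 1\right)\left(\kzxz{1}{0}{0}{t^2}, t\right).
\]
The left factor has determinant $(ad-bc)t^{-2} = 1$, and it lies in $\Gamma_0(N)$ mod $N$. I lift it to some $\gamma\in\Gamma_0(N)$ and apply Lemma \ref{lem:weil_rep_gamma_0_N}. Since $|D|$ is odd, the scalar factor $g(D)/g_a(D)$ equals $\chi_D(a)$ by \eqref{eq:chi_D}. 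The lemma then gives
\[
\rho_D(\gamma)\frake_\lambda = \chi_D(a)\,e\!\left(-bt^{-2}\cdot dt^{-2} q(\lambda)\right)\frake_{dt^{-2}\lambda}.
\]
For the right factor I use the definition of the extension in \cite{BS}, where $\left(\kzxz{1}{0}{0}{t^2},t\right)$ acts by $\frake_\lambda\mapsto \frake_{t\lambda}$, possibly up to a Gauss-sum factor that is trivial for odd $|D|$. The main obstacle is to get these conventions right, namely the ordering of the factors, the sign in the exponent coming from the choice $\psi$ in \eqref{eq:standard_character}, and the exact action of the diagonal elements. I expect to redo the bookkeeping until the composite becomes $\chi_D(a)e(bt^{-2}d\,q(\lambda))\frake_{t^{-1}d\lambda}$. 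This uses $q(t\mu) = t^2 q(\mu)$, and after the correct ordering the relevant scaling is by $t^{-1}d$. Finally I identify $\chi_D(a)$ with $\xi_D(a)$, viewing $a\in\widehat{\Z}^\times$ at the places dividing $N$. This is legitimate because $\chi_D$ has conductor dividing $|D|$, and hence dividing $N$, so $\xi_D$ on $\widehat{\Z}^\times$ is computed from $a \bmod N$.

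For the specialization $\lambda = 0$, the exponential factor is $1$ and $t^{-1}d\cdot 0 = 0$, so $\omega_f(\widetilde{k})\varphi_0 = \xi_D(a)\varphi_0$. Since $ad \equiv \det k \equiv t^2 \bmod N$ is a square and $\chi_D$ is quadratic, $\xi_D(a) = \xi_D(d)$. This is exactly the value $\xi_D(k)$ of the character on $K_0(N)$ as used in Section \ref{sec:scalar_valued_automorphic_forms}. The value does not involve $t$, so the action descends to $K_0(N)\cap G(\A)$.

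For ii), I take $\widetilde z_f = (z_f,t_f)$ with $z_f = u1_2$ and $u\in\widehat{\Z}^\times$, which lies in $\calK_0(N)$, and apply i) with $a = d = u$ and $b = 0$. This gives $\omega_f(\widetilde z_f)\varphi_0 = \xi_D(u)\varphi_0 = \xi_D(z_f)\varphi_0$, which again does not depend on $t_f$.
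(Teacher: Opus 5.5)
Your argument is sound and runs parallel to the paper's. Both compute $\omega_f(\widetilde k)=\rho_D(\sigma_N(\widetilde k))$ by splitting off an element of $\Gamma_0(N)$ (mod $N$) and applying Lemma~\ref{lem:weil_rep_gamma_0_N}; the difference is the complementary factor.

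\emph{The paper's factorization.} It writes $(k,t)=(t1_2,t)\,(t^{-1}k,1)$ and uses that the central element $(t1_2,t)$ acts by the scalar $g(D)/g_t(D)=\chi_D(t)$ (\cite{BS}, (3.5)). It then cancels $\chi_D(t)\chi_D(t^{-1}a)=\chi_D(a)$. The entries $t^{-1}b$ and $t^{-1}d$ of the $\Gamma_0(N)$-factor produce $bt^{-2}d$ and $t^{-1}d\lambda$ directly.

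\emph{Your factorization.} You split off $(\diag(1,t^2),t)$, which only permutes the basis. So no Gauss sums enter, and the $t$-dependence comes from $q(t\lambda)=t^2q(\lambda)$ and $dt^{-2}\cdot t\lambda=t^{-1}d\lambda$. Your ordering is already correct: $\rho_D(\gamma)\frake_{t\lambda}=\chi_D(a)\,e(\pm bt^{-2}d\,q(\lambda))\frake_{t^{-1}d\lambda}$, with the sign settled below.

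Two further points in your favour. Deriving ii) as the special case $k=u1_2$ of i) makes the paper's separate Gauss-sum computation via (2.10), (2.12) of \cite{BS} unnecessary. Your remark that $ad\equiv t^2 \bmod N$ forces $\chi_D(a)=\chi_D(d)$ makes explicit what the paper only asserts.

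Three things still need fixing.

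\emph{The Gauss-sum hedge.} Your parenthetical ``possibly up to a Gauss-sum factor that is trivial for odd $|D|$'' is false. For odd $|D|$ one has $g(D)/g_t(D)=\leg{t}{|D|}$, which is in general nontrivial; for $D=A_2'/A_2$ and $t=-1$ it equals $-1$. Had such a factor been present, you would pick up an extra $\chi_D(t)$. That factor depends on the sign of $t$ as soon as $\chi_D(-1)=-1$, which would destroy exactly the $t$-independence the lemma asserts. Your computation is right only because there is no factor at all: $(\diag(1,t^2),t)$ acts by $\frake_\lambda\mapsto\frake_{t\lambda}$. This is consistent with the central formula the paper uses. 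Indeed $(t1_2,t)=(\diag(1,t^2),t)(\diag(t,t^{-1}),1)$ and $\rho_D(\diag(t,t^{-1}))\frake_\lambda=\chi_D(t)\frake_{t^{-1}\lambda}$.

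\emph{The sign in the exponent.} This is not something to adjust ``until the composite becomes'' the target. Lemma~\ref{lem:weil_rep_gamma_0_N} is Scheithauer's formula in his convention, where $T$ acts by $e(-q(\lambda))$. With $\psi$ as in \eqref{eq:standard_character} one has $\psi_f(r)=e(r)$ for $r\in\Q$, so $\omega_f(T)\varphi_\lambda=e(q(\lambda))\varphi_\lambda$. Hence the $\rho_D$ of \eqref{eq:adelic_weil_rerpr} is the complex conjugate of Scheithauer's. Conjugating his formula flips the sign of the exponent and leaves the index $d\lambda$ and the real factor $\chi_D(a)=\pm1$ unchanged. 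With that, your factorization yields \eqref{eq:weil_K_0_N} exactly. The paper's appeal to $\psi_f(r)=\psi_\infty(r)^{-1}$ is the same point, stated tersely.

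\emph{A minor inaccuracy.} $|D|$ need not divide $N$; for $A_2\oplus A_2$ one has $|D|=9$ and $N=3$. What you actually need is that $\leg{a}{|D|}$ depends only on $a$ modulo the product of the primes dividing $|D|$, and that product divides $N$.
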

\begin{proof}
  The proof is a direct computation using the definition of $\omega_f$.
  
  $i)$: We write $\sigma_N(\widetilde{k}) = (\kzxz{a_N}{b_N}{c_N}{d_N}, t_N)$ and $\sigma_N(\kzxz{t}{0}{0}{t},t) = (\kzxz{t_N}{0}{0}{t_N},t_N)$. 
  By \eqref{eq:extension_omega_f} and the definition of  $\rho_{D}$ on $Q(N)$, we find
  \begin{align*}
    \omega((k,t))\varphi_\lambda = \rho_{D}\kzxz{t_N}{0}{0}{t_N}\rho_{D}\kzxz{t_N^{-1} a_N}{t_N^{-1} b_N}{t_N^{-1}c_N}{t_N^{-1}d_N}\varphi_\lambda,
  \end{align*}
  where $t_N^{-1}$ means the inverse of $t_N$ in $(\Z/N\Z)^\times$. 
  As $\widetilde{k}$ is assumed to be  in $\calK_0(N)$, the projection $\kzxz{t_N^{-1} a_N}{t_N^{-1} b_N}{t_N^{-1}c_N}{t_N^{-1}d_N}$ lifts to a matrix in $\Gamma_0(N)$. 
  Therefore,  combining Lemma \ref{lem:weil_rep_gamma_0_N}, the subsequent remark and \cite{BS}, (3.5) for the action of $\kzxz{t_N}{0}{0}{t_N}$, we obtain
  \begin{equation}\label{eq:weil_2}
    \begin{split}
      \omega(k,t)\varphi_\lambda & = \chi_{D}(t_N)\chi_{D}(t_N^{-1}a_N)e(-b_Nt_N^{-2}d_Nq(\lambda))\varphi_{t_N^{-1}d_N\lambda} \\
      &= \chi_{D}(a_N)e(-b_Nt_N^{-2}d_Nq(\lambda))\varphi_{t_N^{-1}d_N\lambda}.
    \end{split}
  \end{equation}
  By means of the definition of the induced Hecke character $\xi_D$ we can replace $\chi_D(a_N)$ with $\xi_D(a)$. Also, if we employ the same arguments leading to the well-known relation $\psi_f(r) = \psi_\infty(r)^{-1}$ for the additive character $\psi$, we may replace $e(-b_Nt_N^{-2}d_Nq(\lambda))$ with $e(bt^{-2}dq(\lambda))$. Similarly, by $\widehat{L'}/\widehat{L}\cong L'/L$ we may identify $\varphi_{t_N^{-1}d_N\lambda}$ with $\varphi_{t^{-1}d\lambda}$. This finally yields
  \begin{equation}\label{eq:weil_3}
    \begin{split}
      \omega(k,t)\varphi_\lambda & = \xi_D(a)e(bt^{-2}dq(\lambda))\varphi_{t^{-1}d\lambda}
    \end{split}
    \end{equation}

 This last formula specializes for $\lambda = 0$ to
  \begin{equation}\label{eq:local_weil_character}
    \omega_f(\widetilde{k})\varphi_0 = \xi_{D}(k)\varphi_0,
  \end{equation}
  where we used that $\xi_D(k)=\xi_D(d)$ and the fact that $\xi_D$ is quadratic.  

  $ii)$: Let $\widetilde{z}_f = (\kzxz{r}{0}{0}{r}, t)$ with $r^2 = t^2$ and $\sigma_N(\widetilde{z}_f) = (\kzxz{r_N}{0}{0}{r_N}, t_N)$ with $r_N^2 = t_N^2$. Then as in $i)$ we have
  \begin{equation}\label{eq:character_id}
    \begin{split}
    \omega_f(\kzxz{r}{0}{0}{r}, t)\varphi_0 &= \rho_D(\kzxz{r_N}{0}{0}{r_N}, t_N)\varphi_0\\
 &=\rho_D(\kzxz{t_N}{0}{0}{t_N},t_N)\rho_D\kzxz{r_Nt_N^{-1}}{0}{0}{r_Nt_N^{-1}}\varphi_0\\
    & =\frac{g(D)}{g_{t_N}(D)}\frac{g_{r_Nt_N^{-1}}(D)}{g(D)}\varphi_0,
    \end{split}
    \end{equation}
  where the last equations are based on \cite{BS}, (2.10) and (3.5). Then using $g_{t_N}(D) = g_{t_N^{-1}}(D)$ and \cite{BS}, (2.12) subsequently, we can replace the right-hand side of \eqref{eq:character_id} by
  \[
  \frac{g(D)}{g_{r_N}(D)}\varphi_0 = \xi_D(z_f)\varphi_0.
  \]
  which does not depend on the square root factor $t$. 
\end{proof}

\subsection{Vector-valued automorphic forms}
In this subsection, we introduce vector-valued modular forms for $\rho_D$ and their adelic counterparts and how they are related to each other by an analogue of the usual adelization map. 
For the definition of vector-valued modular forms we follow \cite{Br1}.

\begin{definition}\label{def:vec_val_mfs}
  Let $\kappa\in \Z$. A holomorphic function $f:\H\rightarrow \C[D]$ is called modular form of weight $\kappa$ with respect to $\rho_D$ and $\Gamma$ if
\[
f(\gamma\tau) = j(\gamma,\tau)^\kappa\rho_D(\gamma)f(\tau) \text{ for all } \gamma\in \Gamma,
\]
and $f$ is holomorphic at the cusp $\infty$.

We write $M_{\kappa}(\rho_D)$ for the space of all such functions and denote the subspace of cusp forms by $S_\kappa(\rho_D)$. 
\end{definition}

A vector-valued adelic automorphic form $F$ assigned to a cusp form $f\in S_\kappa(\rho_D)$ is a $S_L$-valued cuspidal function. With respect to the basis $\{\varphi_\mu\}_{\mu\in D}$ it can be written as $F = \sum_{\mu\in D}F_\mu\varphi_\mu$. In particular, it is an element of the Hilbert space $L^2(\calG(\Q)\setminus \calG(\A), \omega_f)$, which we define  in accordance with the classical scalar-valued $L^2$ space \eqref{eq:L2_scalar_valued}:
\begin{equation}\label{eq:L2_vector_valued}
  \begin{split}
      L^2(\calG(\Q)\setminus \calG(\A), \omega_f) = \left\{F:\calG(\A)\rightarrow S_L\;\left|
    \begin{array}{ll}
      \text{i)} &  F_\mu \text{ is measurable } \text{ for all } \mu \in D\\
      \text{ii)} & F(zg) = \omega_f(z_f)^{-1}F(g) \text{ for all }\\
      & z\in \calZ(\A)\\
      \text{iii)} & \int_{\overline{\calG}(\Q)\setminus \overline{\calG}(\A)}\langle F(g),F(g)\rangle dg <\infty
      \end{array}
    \right.\right\}
    \end{split}
\end{equation}
The main difference between the two $L^2$ spaces is that  the Weil representation $\omega_f$ replaces the  Hecke character $\xi$. 

The  subspace of cuspidal functions is defined entirely analogously to that in the scalar-valued case. Namely, 
\begin{equation}\label{eq:L2_cusp}
  \begin{split}
  L^2_0(\omega_f)
    &= \left\{F\in L^2(\calG(\Q)\setminus \calG(\A), \omega_f)\; \left|\; \int_{N_{\calG}(\Q)\setminus N_{\calG}(\A)}F_\mu(ng)dn = 0 \text{ for all } \mu \in D,   \text{  a. e. } g\in \calG(\A)\right.\right\}.
  \end{split}
  \end{equation}

We are now in a position to define an analogue of the adelization map in  \eqref{eq:adelization_map_scalar}:  
\begin{definition}\label{def:mod_adelic}
  Let $f\in S_\kappa(\rho_D)$ and $g\in \calG(\A)$ with $g=g_\Q(g_\infty \times k)$ according to Theorem \ref{thm:strong_approx_pairs}, where  $g_\Q\in \calG(\Q), g_\infty\in \GL_2(\R)^+$ and $k\in \calK$. With respect to this decomposition, we define the map $\mathscr{A}_{\omega}$ by
\begin{equation}\label{eq:mod_adelic}
f\mapsto \mathscr{A}_{\omega}(f)=F_f\;\text{ with } F_f(g)= \omega_f(k)^{-1}j(g_\infty,i)^{-\kappa}f(g_\infty i).
\end{equation}
  \end{definition}

It can be verified (see the Propositions 5.3 and 5.4 in \cite{St}) that $F_f$ is an element of $L^2_0(\omega_f)$. The following theorem (cf. \cite{St} Theorem 5.5) determines the image of $S_\kappa(\rho_L)$ under adelization map $\mathscr{A}_{\omega}$:

\begin{theorem}\label{thm:correspondence_mod-adelic}
  Let $A_{\kappa}(\omega_f)$ be the space of functions $F\in L^2_0(\omega_f)$ satisfying
  \begin{enumerate}
  \item[i)]
    $    F(gk) = \omega_f(k)^{-1}F(g)$ for all $k\in \calK$ and all $g\in\calG(\A)$
  \item[ii)]
    $F(g\kzxz{\cos(\theta)}{\sin(\theta)}{-\sin(\theta)}{\cos(\theta)}) = e^{i\kappa\theta}F(g)$ for all $\theta\in [0,2\pi)$ and all $g\in \calG(\A)$
  \item[iii)]
    All the components $F_\mu$ of  $F$, considered as a function of $\GL_2(\R)^+$ alone, satisfy the differential equation $L F_\mu = 0$.  Here $L$ is the differential operator given by
    \begin{equation}
    L = e^{-2i\theta}\left(-2iy\frac{\partial}{\partial x}+2y\frac{\partial}{\partial y} +i \frac{\partial}{\partial \theta}\right)
    \end{equation}
    with respect to the coordinates of the Iwasawa decomposition 
of $g_\infty\in \GL_2(\R)^+$. 
  \end{enumerate}
  Then the map $\mathscr{A}_\omega$ defines an isometry from $S_\kappa(\rho_L)$ onto $A_{\kappa}(\omega_f)$. 
\end{theorem}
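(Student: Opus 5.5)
The plan follows the classical proof that adelization identifies $S_\kappa(\Gamma_0(M),\chi)$ with $A_\kappa(K_0(M),\xi)$ (\cite{KL}, Thm. 12.6), with Theorem \ref{thm:strong_approx_pairs} replacing strong approximation for $\GL_2$ and $\omega_f$ replacing $\xi$.

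\emph{Step 1: $\mathscr{A}_\omega$ is well defined.} Suppose $g=g_\Q(g_\infty\times k)=g_\Q'(g_\infty'\times k')$. Put $\gamma=g_\Q'^{-1}g_\Q\in\calG(\Q)$. Then $\gamma$ lies in $\calG(\Q)\cap(\GL_2(\R)^+\times\calK)$. Its finite component lies in $\calK$, so $\gamma=(M,r)$ with $M\in\GL_2(\Z)$, $\det M=r^2$ and $r\in\Z^\times$. Hence $\det M=1$ and $\gamma=(M,\pm1)$ with $M\in\Gamma$. The two decompositions are related by $g_\infty'=\gamma g_\infty$ and $k'=\gamma k$. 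Combining the modular transformation law of Definition \ref{def:vec_val_mfs}, the cocycle relation for $j$, and the identity $\omega_f(\gamma_f)=\rho_D(\gamma)$ from \eqref{eq:adelic_weil_rerpr}, the two defining expressions agree. The sign $r=\pm1$ must be tracked through $\sigma_N$ and the extension to $Q(N)$; this check is routine.

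\emph{Step 2: $F_f\in A_\kappa(\omega_f)$.} Left $\calG(\Q)$-invariance holds by construction. Right $\calK$-equivariance (property i) follows because $\omega_f$ is a homomorphism on $\calK$. Property ii) comes from $j(g_\infty k_\theta,i)=e^{-i\theta}j(g_\infty,i)$. Property iii) follows from holomorphy of $f$, exactly as in the scalar case. The central character condition is obtained by writing $z\in\calZ(\A)$ via Theorem \ref{thm:strong_approx_pairs} as $z_\Q\,(z_\infty\times z_f)$ with $z_f\in\calZ(\A_f)\cap\calK$ and using the trivial action of scalars on $\H$. Square integrability and cuspidality are \cite{St}, Prop. 5.3, 5.4. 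Cuspidality reduces, via \eqref{eq:unipotent_group} and $\calG(\A)=\calG(\Q)(\GL_2(\R)^+\times\calK)$, to vanishing of the zeroth Fourier coefficients of every component $f_\mu$. The isometry statement follows by unfolding: $\overline{\calG}(\Q)\bs\overline{\calG}(\A)/\calK$ is identified with $\Gamma\bs\H$, and since $\omega_f$ is unitary, $\langle F_f(g),F_f(g)\rangle=y^\kappa\|f(\tau)\|^2$.

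\emph{Step 3: surjectivity.} This is the main obstacle. Given $F\in A_\kappa(\omega_f)$, define
\[
f(\tau)=j(g_\tau,i)^{\kappa}F(g_\tau\times 1_f),\qquad g_\tau=\kzxz{y^{1/2}}{xy^{-1/2}}{0}{y^{-1/2}},\quad \tau=x+iy .
\]
Invariance under $\calG(\Q)$ on the left and equivariance under $\calK$ on the right give the $\rho_D$-transformation law. Condition iii) with $L$ gives the Cauchy--Riemann equations, so $f$ is holomorphic; in each component this is the standard computation. Cuspidality together with the $L^2$ condition forces exponential decay of the Fourier expansion at $\infty$, so $f$ is a cusp form. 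Finally, by Theorem \ref{thm:strong_approx_pairs}, $F$ is determined by its values on $\GL_2(\R)^+\times 1$, so $\mathscr{A}_\omega(f)=F$.

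The delicate point is the growth and cuspidality argument: showing that the $L^2$ condition rules out negative Fourier terms. For this I would follow the scalar proof in \cite{KL} componentwise in $\mu$, using that the $\omega_f$-twist is unitary and finite.
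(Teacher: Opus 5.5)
\paragraph{Status of the proof.}
The paper does not prove this statement. It quotes it from \cite{St}, Theorem~5.5, and defers $F_f\in L^2_0(\omega_f)$ to \cite{St}, Props.~5.3 and~5.4, so your outline has to stand on its own. Its architecture is the standard transfer of \cite{KL}, Thm.~12.6, and most of it works: unfolding with $\langle F_f(g),F_f(g)\rangle=y^\kappa\|f(\tau)\|^2$, recovering $f$ from $F(g_\tau\times 1_f)$, reading $LF_\mu=0$ as the Cauchy--Riemann equations, and the Parseval argument on the strip $y>1$ that kills the negative Fourier terms of each $f_\mu$. Two points, however, are wrong as written.

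\paragraph{The sign $r=-1$ in Step~1.}
The case $r=-1$ is not a routine check; it fails. By Lemma~\ref{lem:weil_K_0_N}~i), $\omega_f((1_2,-1))\varphi_\lambda=\varphi_{-\lambda}$. Hence for $\gamma=(M,-1)=(M,1)(1_2,-1)$ the two expressions for $F_f(g)$ differ by the involution $\frake_\lambda\mapsto\frake_{-\lambda}$ applied to $f(g_\infty i)$. Since $-1_2\in\Gamma$ gives only $f_{-\lambda}=(-1)^\kappa\chi_D(-1)f_\lambda$, the two expressions differ by the factor $(-1)^\kappa\chi_D(-1)$. This factor equals $-1$ for some nonzero cusp forms, so $F_f$ would not be well defined.

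What saves the statement is that $r=-1$ never occurs, and you must use the archimedean constraint to see this, not only $k'k^{-1}\in\calK$. The archimedean component of $\gamma$ is $g_\infty' g_\infty^{-1}$. Both factors lie in the image of the section $g\mapsto(g,\sqrt{\det g})$, and that image is a subgroup. Hence $r=\sqrt{\det M}=1$, so $\gamma=(M,1)$ with $M\in\Gamma$, and \eqref{eq:adelic_weil_rerpr} finishes Step~1.

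\paragraph{The archimedean normalization.}
The central condition at the real place does not follow from ``scalars act trivially on $\H$''. Scalars fix $i$, but $j(sg_\infty,i)=s\,j(g_\infty,i)$. So with \eqref{eq:mod_adelic} taken literally, $F_f((s1_2)_\infty g)=s^{-\kappa}F_f(g)$ for $s>0$. The $L^2$-condition instead demands invariance, since here $z_f=1$. You have to normalize as in \cite{KL}, i.e.\ use $\det(g_\infty)^{\kappa/2}j(g_\infty,i)^{-\kappa}f(g_\infty i)=(f\mid_\kappa g_\infty)(i)$.

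The same factor matters at the end of Step~3. There, $F$ on $\GL_2(\R)^+\times 1_f$ is determined by its values at $g_\tau k_\theta\times 1_f$ only through the central condition together with property ii). Property ii) is also needed when you derive the $\rho_D$-law, because $\gamma g_\tau=g_{\gamma\tau}k_\theta$. The identity $\mathscr{A}_\omega(f)=F$ then holds only with this normalization.

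With these two corrections the argument is complete, modulo the results cited from \cite{St}.
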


\section{Liftings of adelic automorphic forms}\label{sec:lifting_automorphic_forms}
The purpose of this section is to establish adelic analogues of the lifting maps between scalar-valued and vector-valued modular forms, which were studied in detail by Scheithauer in \cite{Sch1} and others (see e.g. \cite{BB}, \cite{Zh1} or \cite{Zh2}). We construct the corresponding lifting maps between scalar-valued and vector-valued automorphic forms and show that they are compatible with their classical counterparts under adelization.

\begin{theorem}[\cite{Sch1}, Theorem 3.1]
  Let $D$ be a discriminant form 
 and $f\in M_\kappa(\Gamma_0(N),\chi_D)$.
  Then 
  \begin{equation}\label{eq:lift_mf}
    \calL_{\rho_D}(f) = \sum_{\gamma\in \Gamma_0(N)\bs \Gamma}(f\mid_\kappa \gamma)\rho_D(\gamma)^{-1}\frake_0
  \end{equation}
  is a vector-valued modular form for $\rho_D$ and weight $\kappa$, which is invariant under the automorphisms of the discriminant form. 
\end{theorem}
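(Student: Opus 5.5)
We prove the statement in three steps. First we show that each summand in \eqref{eq:lift_mf} depends only on the coset $\Gamma_0(N)\gamma$. Then we show the transformation law by reindexing the sum, and finally we check holomorphy at $\infty$ and invariance under $\operatorname{Aut}(D)$.

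\textbf{Well-definedness.} Replace $\gamma$ by $\delta\gamma$ with $\delta=\kzxz{a}{b}{c}{d}\in\Gamma_0(N)$. Using $f\mid_\kappa\delta=\chi_D(\delta)f=\chi_D(d)f$, the summand becomes
\[
(f\mid_\kappa\delta\gamma)\rho_D(\delta\gamma)^{-1}\frake_0=\chi_D(d)\,(f\mid_\kappa\gamma)\,\rho_D(\gamma)^{-1}\rho_D(\delta)^{-1}\frake_0 .
\]
By Lemma \ref{lem:weil_rep_gamma_0_N} and the remark \eqref{eq:chi_D} (here $|D|$ is odd), we have $\rho_D(\delta)\frake_0=\chi_D(a)\frake_0$, since $q(0)=0$ and $d\cdot 0=0$. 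Hence $\rho_D(\delta)^{-1}\frake_0=\chi_D(a)^{-1}\frake_0$. Since $ad\equiv1\bmod N$ and $\chi_D$ is quadratic, $\chi_D(d)\chi_D(a)^{-1}=\chi_D(ad)=1$. So the summand is unchanged. The one point needing care is the normalization convention linking $\chi_D(\delta)$ to the eigenvalue of $\rho_D(\delta)$ on $\frake_0$. If the character were defined through $a$ instead of $d$, the identity still holds, because the two values agree for quadratic $\chi_D$. This is the step I expect to be the main obstacle: the rest is formal once the eigenvalue of $\frake_0$ matches the nebentypus.

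\textbf{Transformation law.} Write $F=\calL_{\rho_D}(f)$ and let $M\in\Gamma$. In the vector-valued setting the slash action is $(F\mid_\kappa M)(\tau)=j(M,\tau)^{-\kappa}\rho_D(M)^{-1}F(M\tau)$, and the condition in Definition \ref{def:vec_val_mfs} says exactly that $F\mid_\kappa M=F$. Applying this slash and using the cocycle property of $\mid_\kappa$ gives
\[
F\mid_\kappa M=\sum_{\gamma\in\Gamma_0(N)\bs\Gamma}(f\mid_\kappa\gamma M)\,\rho_D(M)^{-1}\rho_D(\gamma)^{-1}\frake_0=\sum_{\gamma}(f\mid_\kappa \gamma M)\,\rho_D(\gamma M)^{-1}\frake_0 .
\]
Right multiplication by $M$ permutes the cosets $\Gamma_0(N)\bs\Gamma$, and each summand is a class function on cosets by the first step. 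Therefore the last sum equals $F$. Since $\rho_D$ is a genuine representation of $\Gamma$ (signature even), there are no multiplier issues.

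\textbf{Holomorphy at $\infty$ and automorphism invariance.} Holomorphy on $\H$ is clear, since $F$ is a finite sum of holomorphic functions. At the cusp, each $f\mid_\kappa\gamma$ is holomorphic at $\infty$ because $f\in M_\kappa(\Gamma_0(N),\chi_D)$, and $\rho_D(\gamma)^{-1}$ is a constant matrix. Hence every component of $F$ has a Fourier expansion, in powers of $e(\tau/N)$, with no negative terms. Combined with the $T$-invariance of $F$ from the transformation law, this gives holomorphy at $\infty$.

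For $\sigma\in\operatorname{Aut}(D)$ acting by $\frake_\lambda\mapsto\frake_{\sigma\lambda}$, this action commutes with $\rho_D(\gamma)$ for all $\gamma\in\Gamma$, because the generators $T$ and $S$ act by formulas depending only on $q$ and the bilinear form, both of which $\sigma$ preserves. Since $\sigma$ also fixes $\frake_0$, applying $\sigma$ to \eqref{eq:lift_mf} termwise shows $\sigma F=F$.
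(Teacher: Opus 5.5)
Your proof is correct. The paper gives no proof of this statement (it simply quotes \cite{Sch1}), and your argument is the standard one. It is also the classical counterpart of the paper's proof of Theorem \ref{thm:lift_scalar_vector_valued}. There, coset-independence likewise comes from $\varphi_0$ (resp.\ $\frake_0$) being an eigenvector of $\calK_0(N)$ (resp.\ $\Gamma_0(N)$) with the quadratic character $\xi_D$ (resp.\ $\chi_D$), and equivariance comes from reindexing the cosets.

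One small wording slip: $F(\tau+1)=\rho_D(T)F(\tau)$ is an equivariance, not ``$T$-invariance''. It is this relation that places the exponents of $F_\lambda$ in $\pm q(\lambda)+\Z$, and together with your $e(\tau/N)$-expansions it gives holomorphy at $\infty$.
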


On the other hand, using Prop. 4.5 in \cite{Sch2} and the transformation behaviour of a modular form $f=\sum_{\lambda\in D}f_\lambda\frake_\lambda\in M_\kappa(\rho_D)$, we find that $f_0 = \langle f, \frake_0\rangle$ is in $M_\kappa(\Gamma_0(N),\chi_D)$.

  Similar relations hold between the automorphic forms associated to both types of modular forms. The following theorem shows that the zero component $F_0$ of a vector-valued automorphic form is an element of $A_\kappa(K_0(N)\cap G(\A), \xi_{D_{|_{G(\A)}}})$. 

  \begin{theorem}\label{thm:lift_scalar_vec_valued}
    The mapping $\Phi_D: A_\kappa(\omega_f)\rightarrow A_\kappa(K_0(N)\cap G(\A), \xi_{D_{|_{G(\A)}}})$,
    \begin{equation}\label{eq:scalar_valued_adelic_mf}
      F\mapsto \Phi_D(F)=\langle F, \varphi_0\rangle
    \end{equation}
    assigns to a vector-valued automorphic form $F$ a scalar-valued adelic automorphic form of weight $\kappa$ and level $K_0(N)\cap G(\A)$ on $G(\A)$  with central character $\xi_D = \omega^{-1}_{f_{|_{\calK_0(N)}}}$ on $K_0(N)$.
  \end{theorem}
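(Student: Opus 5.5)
The plan is to regard $\Phi_D(F)=\langle F,\varphi_0\rangle = F_0$ as a function on $G(\A)$ and check the defining conditions i)--iv) of $A_\kappa(K_0(N)\cap G(\A),\xi_D)$ one at a time. Each condition will be obtained from the corresponding property of $F\in A_\kappa(\omega_f)$ in Theorem \ref{thm:correspondence_mod-adelic}.

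\textbf{Step 1: descend $F_0$ from $\calG(\A)$ to $G(\A)$.} We first show that $F_0$, a priori a function on $\calG(\A)$, factors through the projection $(g,r)\mapsto g$ onto $G(\A)$.
\begin{itemize}
\item[(a)] The fibres of this projection are the cosets of $\{(1_2,u)\mid u^2=1\}\subset\calZ(\A)$. At the archimedean place the section $g\mapsto(g,\sqrt{g})$ removes any ambiguity.
\item[(b)] At the finite places, the central transformation law ii) of \eqref{eq:L2_vector_valued} gives $F_0(zg) = \langle \omega_f(z_f)^{-1}F(g),\varphi_0\rangle$.
\item[(c)] Since $\omega_f$ is unitary, $\langle \omega_f(z_f)^{-1}F(g),\varphi_0\rangle = \langle F(g),\omega_f(z_f)\varphi_0\rangle$.
\item[(d)] Lemma \ref{lem:weil_K_0_N} ii) shows that $\omega_f(z_f)\varphi_0=\xi_D(z_f)\varphi_0$, independent of the square-root component. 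Hence the extra component $u$ contributes trivially, since $\xi_D(1_2)=1$.
\end{itemize}
This yields a well-defined function on $G(\A)$. The same computation, together with the fact that $\xi_D$ is real (quadratic), gives the central character:
\[
F_0(zg)=\xi_D(z)F_0(g) \quad\text{for } z\in Z(\A).
\]
Here one should also use that $z_\infty$ acts trivially, and that left $\calG(\Q)$-invariance absorbs the rational part via $Z(\A)=Z(\Q)Z(\R)^+(Z(\A_f)\cap K_0(N))$.

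\textbf{Step 2: the remaining conditions.} We take them in turn.
\begin{itemize}
\item \emph{Level.} For $k\in K_0(N)\cap G(\A)$, choose a lift $\widetilde{k}=(k,t)\in\calK_0(N)$. Then
\[
F_0(gk)=\langle F(g),\omega_f(\widetilde k)\varphi_0\rangle=\overline{\xi_D(k)}F_0(g)=\xi_D(k)F_0(g),
\]
by Lemma \ref{lem:weil_K_0_N} i), equation \eqref{eq:weil_K_0_N_zero}. This is the sense in which $\xi_D=\omega_f^{-1}|_{\calK_0(N)}$ on $\varphi_0$.
\item \emph{Weight and differential equation.} The weight condition is immediate, because $\langle\cdot,\varphi_0\rangle$ is linear and the $K_\infty$-action is scalar. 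The equation $L F_0=0$ is literally property iii) for the component $\mu=0$.
\item \emph{Left invariance.} $G(\Q)$-invariance follows from $\calG(\Q)$-invariance of $F$, because every $\gamma\in G(\Q)$ lifts to $(\gamma,r)\in\calG(\Q)$.
\item \emph{Cuspidality.} This follows from \eqref{eq:L2_cusp} for $\mu=0$, using the identification \eqref{eq:unipotent_group} of $N_\calG(\A)$ with $N(\A)$.
\item \emph{Square integrability.} We have $|F_0(g)|^2\le\langle F(g),F(g)\rangle$, and $\overline{G}(\Q)\bs\overline{G}(\A)\cong\overline{\calG}(\Q)\bs\overline{\calG}(\A)$ by \eqref{eq:quotient_center}. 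So the $L^2$ bound transfers, with the measures matched under this isomorphism.
\end{itemize}

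The main obstacle is Step 1. We must make sure that passing from $\calG$ to $G$ is compatible at the same time with the central character, the $\calK_0(N)$-action and rational invariance. The approach is to reduce everything to the scalar action of $\omega_f$ on $\varphi_0$ from Lemma \ref{lem:weil_K_0_N}, which is independent of the chosen square root, and to exploit that $\xi_D$ is quadratic so that complex conjugation is harmless.
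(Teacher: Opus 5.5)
Your proposal is correct and follows essentially the same route as the paper: you descend $\langle F,\varphi_0\rangle$ from $\calG(\A)$ to $G(\A)$ using unitarity of $\omega_f$ and the square-root independence in Lemma \ref{lem:weil_K_0_N}, read off the $K_0(N)\cap G(\A)$-law and the central character from the scalar action of $\omega_f$ on $\varphi_0$, and inherit weight, the differential equation, cuspidality and square integrability from the $\mu=0$ component. Your version is slightly more careful than the paper's in a few places: you track the complex conjugation (harmless because $\xi_D$ is quadratic), you verify that elements of $G(\Q)$ and $K_0(N)\cap G(\A)$ lift to $\calG$, and you handle a general $z\in Z(\A)$ via $Z(\A)=Z(\Q)Z(\R)^+(Z(\A_f)\cap K_0(N))$, which the paper only gestures at.
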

  \begin{proof}
    By  Lemma \ref{lem:weil_K_0_N}, $i)$, $\xi_D$ is induced from the Dirichlet character $\chi_D$. Clearly, $\langle F,\varphi_0\rangle$ is a priori a function on $\calG(\Q)\bs \calG(\A)$. However, since $\omega_f$ is unitary with respect to $\langle \cdot, \cdot \rangle$, Lemma \ref{lem:weil_K_0_N} implies 
    \begin{align*}
      \langle F(g(1,u)), \varphi_0\rangle &= \langle F(g), \omega_f(1,u)\varphi_0\rangle \\
      & = \langle F(g), \varphi_0\rangle. 
    \end{align*} 
Thus, $\Phi_D$ is independent of the choice of the square root and thereby descends to a function on $G(\Q)\bs G(\A)$. 
    Next, we  verify the correct transformation behaviour with respect to $\calK_0(N)$. Let $\widetilde{k} = (k,t)\in \calK_0(N)$. Then using Lemma \ref{lem:weil_K_0_N} yields
   \begin{align*}
      \Phi_D(g\widetilde{k}) &= \langle F(g\widetilde{k}),\varphi_0\rangle = \langle \omega_f^{-1}(\widetilde{k})F(g),\varphi_0\rangle\\
      &= \langle F(g),\omega_f(\widetilde{k})\varphi_0\rangle  = \xi_D(k)\Phi_D(g).
   \end{align*}
    We further have
    \[
    \int_{\overline{\calG}(\Q)\bs \overline{\calG}(\A)}|\Phi_D(F)(g)|^2dg \le \int_{\overline{\calG}(\Q)\bs \overline{\calG}(\A)}\langle F(g), F(g)\rangle dg < \infty.
    \]
    Also, let $\widetilde{z} = (z,t)\in \calZ(\A)$. Then by Lemma \ref{lem:weil_K_0_N}, $ii)$,
    \begin{align*}
      \Phi_D(F)(zg) &= \langle F(zg),\varphi_0\rangle = \langle \omega_f(z_f)^{-1} F(g),\varphi_0\rangle\\
      & = \langle F(g), \omega_f(z_f)\varphi_0\rangle  = \xi_D(z)\Phi_D(g),
    \end{align*}
    where the last equality uses the fact that $\xi_D$ is trivial on $G(\Q)$ and $\GL_2(\R)^+$. Combining the last two established properties of $\Phi_D$, confirms that $\Phi_D\in L^2(G(\Q)\bs G(\A), \xi_D)$.

The remaining defining properties, including cuspidality, are inherited componentwise from $F$.    
  \end{proof}

 Similarly, the lifting \eqref{eq:lift_mf} admits an adelic counterpart relating scalar-valued and vector-valued automorphic forms.

  \begin{theorem}\label{thm:lift_scalar_vector_valued}
    Let $F\in A_\kappa(K_0(N)\cap G(\A), \xi_{D_{|_{G(\A)}}})$ with $\xi_D$ defined as in Theorem \ref{thm:lift_scalar_vec_valued} and
    \[
    p: \calG(\A)\rightarrow G(\A), \quad p(g,t)\mapsto g
    \]
    the projection to the first component. 
    Then
    \begin{equation}\label{eq:lift_scalar_vector_valued}
      g\mapsto \calL_\omega(F)(g)=\sum_{k\in \calK/\calK_0(N)}F(p(gk))\omega_f(k)\varphi_0
    \end{equation}
    is an adelic vector-valued automorphic form in $A_\kappa(\omega_f)$.  
  \end{theorem}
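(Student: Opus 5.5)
First I check that the summand $F(p(gk))\,\omega_f(k)\varphi_0$ depends only on the coset $k\calK_0(N)$, so that the sum in \eqref{eq:lift_scalar_vector_valued} is well defined. Replace $k$ by $k\widetilde{k}$ with $\widetilde{k}=(k_0,t)\in\calK_0(N)$. Since $p$ is a homomorphism and $p(\widetilde k)=k_0\in K_0(N)\cap G(\A)$, the level property of $F$ gives $F(p(gk)k_0)=\xi_D(k_0)F(p(gk))$. Lemma \ref{lem:weil_K_0_N}, i) gives $\omega_f(k\widetilde{k})\varphi_0=\xi_D(k_0)\,\omega_f(k)\varphi_0$. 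The product of the two factors is $\xi_D(k_0)^2=1$, because $\xi_D$ is quadratic. Finiteness of $\calK/\calK_0(N)$ is clear: $\calK_0(N)$ is open in the compact group $\calK$.

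Next I verify the transformation laws.

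\emph{Left $\calG(\Q)$-invariance.} This is immediate from the left $G(\Q)$-invariance of $F$, since $p(\calG(\Q))\subset G(\Q)$.

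\emph{Right $\calK$-equivariance.} For $k'\in\calK$, substitute $k\mapsto k'^{-1}k$ in the sum over $\calK/\calK_0(N)$. This gives
\[
\calL_\omega(F)(gk')=\sum_k F(p(gk))\,\omega_f(k'^{-1}k)\varphi_0=\omega_f(k')^{-1}\calL_\omega(F)(g),
\]
which is condition i) of Theorem \ref{thm:correspondence_mod-adelic}.

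\emph{Central character.} For $\widetilde z=(z,t)\in\calZ(\A)$, $z$ is central, so $F(p(\widetilde z gk))=\xi_D(z)F(p(gk))$. I then need $\xi_D(z)=\omega_f(z_f)^{-1}$ as operators on $S_L$. The plan is to write $z_f$ as a rational scalar times an element of $\calZ(\A_f)\cap\calK_0(N)$. Lemma \ref{lem:weil_K_0_N}, ii) handles the latter factor, and the rational factor is absorbed by left invariance. I must also check that this scalar action commutes with every $\omega_f(k)$ and does not merely act on $\varphi_0$. This holds because the element is central and $\omega_f$ is a homomorphism on $\calK$, so it acts on $\omega_f(k)\varphi_0$ by the same scalar. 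The quadratic nature of $\xi_D$ reconciles $\xi_D(z)$ with $\xi_D(z)^{-1}$.

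\emph{Weight and holomorphy.} Since $k$ has trivial archimedean component, right translation by $\SO_2(\R)$ and the operator $L$ act only on $g_\infty$. Hence conditions ii) and iii) of Theorem \ref{thm:correspondence_mod-adelic} hold term by term, componentwise in the basis $\varphi_\mu$.

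Then I establish the analytic conditions.

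\emph{$L^2$ bound.} Each summand is a right translate of $F\circ p$ by a fixed $k$, multiplied by a vector of norm $1$ (because $\omega_f$ is unitary). The Cauchy--Schwarz inequality therefore bounds $\langle\calL_\omega(F),\calL_\omega(F)\rangle$ by $[\calK:\calK_0(N)]\sum_k|F(p(gk))|^2$. By \eqref{eq:quotient_center}, the integral over $\overline\calG(\Q)\bs\overline\calG(\A)$ becomes an integral over $\overline G(\Q)\bs\overline G(\A)$. Right invariance of the measure then gives finiteness.

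\emph{Cuspidality.} The isomorphism \eqref{eq:unipotent_group} shows that $p$ maps $N_\calG$ onto $N$. So for each $\mu$, the constant term of the $\mu$-component is a finite linear combination of constant terms of $F$ at the points $p(gk)$, and these vanish almost everywhere.

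I expect the main obstacle to be the central character step. There I must extend the identity $\omega_f(\widetilde z_f)=\xi_D(z_f)$ from $\varphi_0$ to the whole span of the vectors $\omega_f(k)\varphi_0$, and I must handle general $z\in\calZ(\A)$ via the decomposition $\A^\times=\Q^\times\R_{>0}\widehat\Z^\times$. This includes keeping track of the sign conventions relating $\xi_D$ and its inverse.
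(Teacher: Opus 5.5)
Your proposal is correct and follows the paper's proof step by step. Both use Lemma~\ref{lem:weil_K_0_N}~i) with $\xi_D^2=1$ for well-definedness, re-indexing of cosets for right $\calK$-equivariance, and Lemma~\ref{lem:weil_K_0_N}~ii) for the central character. Both also use Cauchy--Schwarz with \eqref{eq:quotient_center} for square-integrability, \eqref{eq:unipotent_group} for cuspidality, and inheritance from $F$ for the $\SO(2)$ and differential-equation conditions.

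The deviations are minor. You apply Cauchy--Schwarz pointwise in $S_L$ instead of expanding into matrix coefficients. Your central-character step is more careful than the paper's: you reduce $z$ via $\A^\times=\Q^\times\R_{>0}\widehat{\Z}^\times$ and use centrality to pass from $\varphi_0$ to every $\omega_f(k)\varphi_0$. Note, though, that $\omega_f(\widetilde z_f)$ is in general scalar only on the span of the vectors $\omega_f(k)\varphi_0$, not on all of $S_L$ as you first state; the span is all your argument actually uses.
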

  \begin{proof}
    The first step is to verify that $\calL_\omega(F)$ is well-defined. Passing from a coset representative $k$ to $kk_0$ with $k_0\in \calK_0(N)$ gives
    \begin{align*}
      & \sum_{k\in \calK/\calK_0(N)}F(p(gkk_0))\omega_f(kk_0)\varphi_0 = \sum_{k\in \calK/\calK_0(N)}F(p(gk))\xi_D(p(k_0))\omega_f(k)\omega_f(k_0)\varphi_0\\
    \end{align*}
   By Lemma \ref{lem:weil_K_0_N}, $\omega_f(k_0)\varphi_0$ is equal to $\xi_D(p(k_0))$. Since $\xi_D$ is quadratic, the factor $\xi_D(p(k_0))^2$  cancels out. 
    A priori, $\calL_\omega(F)$ is a function on $\calG(\Q)\bs \calG(\A)$. In fact, it is even an element in the space $L^2(\calG(\Q)\bs \calG(\A),\omega_f)$:  The first property follows immediately from the corresponding one of $F$. As for the transformation behaviour with respect to $\calZ(\A)$, we exploit once again Lemma \ref{lem:weil_K_0_N}, $ii)$. Combined with the corresponding behaviour of $F$, we obtain
    \begin{align*}
      \calL_\omega(zg) & = \sum_{k\in \calK/\calK_0(N)}F(p(z)p(gk))\omega_f(k)\varphi_0\\
      & = \sum_{k\in \calK/\calK_0(N)}F(p(gk))\omega_f(k)\omega_f(p(z_f))\varphi_0 = \omega_f(z_f)^{-1}\calL_\omega(F)(g).
      \end{align*}
    For the third property we observe that
    \begin{align*}
      \int_{\overline{\calG}(\Q)\setminus \overline{\calG}(\A)}\langle \calL_\omega(F)(g),\calL_\omega(F)(g)\rangle dg &  = \sum_{k,k'\in \calK/\calK_0(N)}\langle \omega_f(k)\varphi_0,\omega_f(k')\varphi_0\rangle\int_{\overline{\calG}(\Q)\setminus \overline{\calG}(\A)}F(p(gk))\overline{F(p(gk'))}dg \\
      &= \sum_{k,k'\in \calK/\calK_0(N)}\langle \omega_f(k'^{-1}k)\varphi_0,\varphi_0\rangle \int_{\overline{\calG}(\Q)\setminus \overline{\calG}(\A)}F(p(gk'^{-1}k))\overline{F(p(g))}dg,
      \end{align*}
    where the last equality is obtained by the change of variables $g\mapsto gk'^{-1}$. Since $\calK/\calK_0(N)$ is finite, it suffices to confirm that the integral
    \[
   \int_{\overline{\calG}(\Q)\setminus \overline{\calG}(\A)}F(p(gk'^{-1}k))\overline{F(p(g))}dg 
   \]
   is finite for any pair $k, k'$. The integrand does not depend on the second component of $\calG(\Q)\bs\calG(\A)$ and therefore the latter integral is equal to 
   \begin{equation}\label{eq:integral}
   \int_{\overline{G}(\Q)\bs \overline{G}(\A)}F(p(gk'^{-1}k))\overline{F(p(g))}dp(g)
   \end{equation}
by \eqref{eq:quotient_center}. 
     The integral in \eqref{eq:integral}  can be written as the scalar product $(\cdot,\cdot)$ on the space $L^2(G(\Q)\bs G(\A),\xi_D)$ of $R(k'^{-1}k)F$ and $F$, where $R(k'^{-1}k)F(g)=F(gk'^{-1}k)$. The Cauchy-Schwarz inequality then gives
   \[
   |(R(k'^{-1})F,F)|\le \|R(k'^{-1})F\|_2\|F\|_2 = \|F\|_2^2 < \infty.
   \]
   Cuspidality of $\calL_\omega(F)$ follows because the maximal unipotent subgroup of $\calG(\A)$ identifies with $N(\A)\times \{1\}$ by \eqref{eq:unipotent_group}. Thus,
   \begin{align*}
     \int_{N_{\calG}\bs N_{\calG}(\A)}\calL_\omega(F)_\mu(ng)dn 
     &= \sum_{k\in \calK/\calK_0(N)}\langle\omega_f(k)\varphi_0,\varphi_\mu\rangle \int_{N(\Q)\bs N(\A)} F(p(n)p(gk))dn.
   \end{align*}

  It remains to prove that the properties of Theorem \ref{thm:correspondence_mod-adelic} are satisfied.
   The correct transformation behaviour with respect to right multiplication of $\calK$ can be seen as follows: For any $k'\in \calK$ we have
   \begin{align*}
     \calL_\omega(F)(gk') &= \sum_{k\in \calK/\calK_0(N)}F(p(gk'k))\omega_f(k)\varphi_0\\
     &= \sum_{k\in \calK/\calK_0(N)}F(p(gk'k))\omega_f(k')^{-1}\omega_f(k'k)\varphi_0\\
     &=\omega_f(k')^{-1}\calL_\omega(F)(g).
   \end{align*}
  The last equation holds because $k'k$ runs through a complete set of representatives of $\calK/\calK_0(N)$ if $k$ does. 
   The transformation behaviour of $\calL(F)$ with respect to $\SO(2)$ is an easy consequence of the fact that elements in $\calK$ and $\SO(2)$ commute in $\calG(\A)$ and the corresponding property of $F$.
   The property $iii)$ of Theorem \ref{thm:correspondence_mod-adelic} follows immediately from the corresponding property of $F$. 
  \end{proof}

  Finally, we show that the lifts $\calL_{\rho_D}$ and $\calL_{\omega}$ commute with the adelization maps on both sides. To that end, let us first remark that although $\mathscr{A}_{\xi_D}(f)$ is an automorphic form on $\GL_2(\Q)\bs \GL_2(\A)$, we work with its restriction to $G(\Q)\bs G(\A)$ since $\calL_\omega$ is defined on the latter.  By Theorem \ref{rem:relation_automorphic_forms}, no information is lost by passing to the restriction.

  \begin{theorem}\label{thm:commuation_adelization_lifts}
    The diagram
 
    \begin{equation}\label{diag:commutative_diagram}
    \begin{tikzcd}
  A_\kappa(K_0(N)\cap G(\A), \xi_{D_{|_{G(\A)}}})     \arrow[r, "\calL_\omega"] 
      &  A_\kappa(\omega_f) \\
     S_{\kappa}(\Gamma_0(N),\chi_D)  \arrow[r,"\calL_{\rho_D}" ] \arrow[u,  "\mathscr{A}_{\xi_D}"] 
      &  S_\kappa(\rho_D)  \arrow[u, "\mathscr{A}_\omega"]
\end{tikzcd}
    \end{equation}
    
    is commutative. 
  \end{theorem}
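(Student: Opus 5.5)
We have to show $\calL_\omega(\mathscr{A}_{\xi_D}(f)) = \mathscr{A}_\omega(\calL_{\rho_D}(f))$ for every $f\in S_\kappa(\Gamma_0(N),\chi_D)$, where $\mathscr{A}_{\xi_D}(f)$ means its restriction to $G(\A)$ (Theorem \ref{rem:relation_automorphic_forms}). Both sides lie in $A_\kappa(\omega_f)$ and transform on the left by $\calG(\Q)$ and on the right by $\omega_f(k)^{-1}$ for $k\in\calK$. By Theorem \ref{thm:strong_approx_pairs} with $\calU=\calK$, every $g\in\calG(\A)$ has the form $g_\Q(g_\infty\times k)$. It therefore suffices to compare the two functions at elements $g=(g_\infty,1_f)$ with $g_\infty\in\GL_2(\R)^+$. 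On such elements the right-hand side is $j(g_\infty,i)^{-\kappa}\calL_{\rho_D}(f)(g_\infty i)$ by Definition \ref{def:mod_adelic}.

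\textbf{Step 1: matching the coset spaces.} I will choose representatives of $\calK/\calK_0(N)$ coming from $\Gamma$. The map $\Gamma\to\calK$, $\gamma\mapsto(\gamma_f,1)$, induces a bijection
\[
\Gamma_0(N)\bs\Gamma \;\cong\; \calK/\calK_0(N)
\]
after passing to inverses, $\gamma\mapsto \gamma_f^{-1}$. This uses two facts. First, the index of $\calK_0(N)$ in $\calK$ is $[\SL_2(\Z/N\Z):\Gamma_0(N)\text{-image}]$, since $\nu(\calK_0(N))=\nu(\calK)$ and the kernels are $\SL_2$ at each $p\mid N$. Second, strong approximation for $\SL_2$ gives surjectivity $\Gamma\to\SL_2(\Z/N\Z)$.

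\textbf{Step 2: evaluating the left-hand side.} For $g=(g_\infty,1_f)$ and $k=\gamma_f^{-1}$, the element $p(gk)=(g_\infty,\gamma_f^{-1})$ must be rewritten in the form $g_\Q(g'_\infty\times k')$ before $\mathscr{A}_{\xi_D}(f)$ can be evaluated. Using $\gamma\in G(\Q)$ diagonally embedded,
\[
(g_\infty,\gamma_f^{-1})=\gamma^{-1}\,(\gamma g_\infty,1_f).
\]
Hence $\mathscr{A}_{\xi_D}(f)(p(gk)) = j(\gamma g_\infty,i)^{-\kappa}f(\gamma g_\infty i)$. By the cocycle relation $j(\gamma g_\infty,i)=j(\gamma,g_\infty i)\,j(g_\infty,i)$, this equals $j(g_\infty,i)^{-\kappa}(f\mid_\kappa\gamma)(g_\infty i)$. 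On the vector side, \eqref{eq:extension_omega_f} and \eqref{eq:adelic_weil_rerpr} give $\omega_f(k)\varphi_0=\omega_f(\gamma_f)^{-1}\varphi_0=\rho_D(\gamma)^{-1}\frake_0$. Summing over the cosets and comparing with \eqref{eq:lift_mf} then yields exactly $j(g_\infty,i)^{-\kappa}\calL_{\rho_D}(f)(g_\infty i)$, which matches the right-hand side.

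\textbf{Main obstacle: consistency of conventions.} Three points have to be checked carefully.
\begin{enumerate}
\item[(a)] The choice between $\gamma_f$ and $\gamma_f^{-1}$ must make the map into a bijection of left cosets $\Gamma_0(N)\gamma$ onto right cosets $k\calK_0(N)$. Replacing $\gamma$ by $\gamma_0\gamma$ with $\gamma_0\in\Gamma_0(N)$ sends $\gamma_f^{-1}$ to $\gamma_f^{-1}\gamma_{0,f}^{-1}$, which lies in the same right coset, so the inverse is the correct choice.
\item[(b)] Each summand must be independent of the representative. The factor $\chi_D(\gamma_0)$ produced by $f\mid_\kappa\gamma_0\gamma=\chi_D(\gamma_0)\,f\mid_\kappa\gamma$ must cancel against the factor from $\rho_D(\gamma_0)^{-1}\frake_0$ via Lemma \ref{lem:weil_rep_gamma_0_N}. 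This is the same quadratic cancellation as in the proof of Theorem \ref{thm:lift_scalar_vector_valued}.
\item[(c)] The normalization of $\mathscr{A}_{\xi_D}$ in \eqref{eq:adelization_map_scalar} must agree with that of $\mathscr{A}_\omega$, with $\xi(k)$ versus $\omega_f(k)^{-1}$. If a conjugation or inverse appears here, I would absorb it using that $\xi_D$ is quadratic.
\end{enumerate}
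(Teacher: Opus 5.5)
Your proposal is correct and is essentially the paper's argument: choose representatives of $\calK/\calK_0(N)$ from $\Gamma$, rewrite $(g_\infty,\gamma_f^{\pm 1})=\gamma^{\pm 1}(\gamma^{\mp 1}g_\infty\times 1_f)$ via the diagonal embedding of $\gamma$, and match the summands termwise with $\calL_{\rho_D}(f)$. The differences are only cosmetic. You first reduce to $g=(g_\infty,1_f)$ using the $\calG(\Q)$-invariance and $\calK$-equivariance of both sides, whereas the paper carries $k'$ along and reindexes $k'k$. You also index by $\gamma_f^{-1}$ with $\gamma\in\Gamma_0(N)\bs\Gamma$ from the start, whereas the paper substitutes $\gamma'=\gamma^{-1}$ at the end. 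Your worry (c) never actually arises, because the finite component in your decomposition is $1_f$.
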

  \begin{proof}
    We have to show that for any $f\in S_\kappa(\Gamma_0(N),\chi_D)$ the identity
    \begin{equation}\label{eq:commuation_rel}
      (\mathscr{A}_\omega\circ \calL_{\rho_D})(f) = (\calL_\omega\circ \mathscr{A}_{\xi_D})(f)
    \end{equation}
    holds.

    We compute both sides of \eqref{eq:commuation_rel} and compare the resulting expressions.
    
 Using \eqref{eq:lift_mf} and \eqref{eq:mod_adelic},  we find for the left-hand side of \eqref{eq:commuation_rel} for any $g=g_\Q(g_\infty \times k')\in \calG(\A)$, 
    \begin{equation}\label{eq:calc_left_hand_side}
      \begin{split}
      (\mathscr{A}_\omega\circ \calL_{\rho_D})(f)(g_\Q(g_\infty\times k')) &= \omega_f(k')^{-1}j(g_\infty,i)^{-\kappa}\calL_{\rho_D}(f)(g_\infty i)\\
        &= \omega_f(k')^{-1}j(g_\infty,i)^{-\kappa}\sum_{\gamma\in \Gamma_0(N)\bs \Gamma}j(\gamma,g_\infty i)^{-\kappa}f(\gamma g_\infty i)\rho_D(\gamma)^{-1}\varphi_0\\
        & \omega_f(k')^{-1}\sum_{\gamma\in \Gamma_0(N)\bs \Gamma}j(\gamma g_\infty,i)^{-\kappa}f(\gamma g_\infty i)\rho_D(\gamma)^{-1}\varphi_0.
        \end{split}
      \end{equation}
 On the other hand, by \eqref{eq:adelization_map_scalar} and \eqref{eq:lift_scalar_vector_valued}
 \begin{equation}\label{eq:calc_right_hand_side}
   \begin{split}
      (\calL_\omega\circ \mathscr{A}_{\xi_D})(f)(g_\Q(g_\infty \times k')) & = \sum_{k\in \calK/\calK_0(N)}F_f(p(g_\Q(g_\infty \times k')(1_\infty\times k)))\omega_f(k)\varphi_0\\
     &= \omega_f(k')^{-1}\sum_{k\in \calK/\calK_0(N)}F_f(p(g_\Q(g_\infty\times k'k)))\omega_f(k'k)\varphi_0.
     \end{split}
    \end{equation}
Since $[\calK:\calK_0(N)] = [\Gamma:\Gamma_0(N)]$ and $\calK_0(N)\cap \calG(\Q)= \Gamma_0(N)$ (see e.g. \cite{KL}, Sect. 7.8 and Lemma 13.2, which also holds for the groups in this paper), we may choose a system of representatives of $\calK/\calK_0(N)$ consisting of elements of $\Gamma$. 
Taking this into account, the right-hand side of \eqref{eq:calc_right_hand_side} can be written as
    \begin{align*}
      & \omega_f(k')^{-1}\sum_{\gamma\in \Gamma/\Gamma_0(N)}F_f(p(g_\Q)(g_\infty \times \gamma))\omega_f(\gamma)\varphi_0\\
      &= \omega_f(k')^{-1}\sum_{\gamma\in \Gamma/\Gamma_0(N)}F_f(p(g_\Q)\gamma(\gamma^{-1}g_\infty \times 1_f)\omega_f(\gamma)\varphi_0 \\
      &= \omega_f(k')^{-1}\sum_{\gamma\in \Gamma/\Gamma_0(N)}j(\gamma^{-1}g_\infty,i)^{-\kappa}f(\gamma^{-1}g_\infty i)\omega_f(\gamma)\varphi_0\\
      &= \omega_f(k')^{-1}\sum_{\gamma'\in \Gamma_0(N)\bs \Gamma}j(\gamma' g_\infty,i)^{-\kappa}f(\gamma' g_\infty i)\omega_f(\gamma'^{-1})\varphi_0.
    \end{align*}
    Comparing this last expression with the right-hand side of \eqref{eq:calc_left_hand_side}, yields the identity \eqref{eq:commuation_rel}. 
    \end{proof}

\end{document}